\newtheorem{theorem}{Theorem}
\newtheorem{lemma}{Lemma}
\newtheorem{proposition}{Proposition}
\theoremstyle{definition}
\theoremstyle{definition}\newtheorem{remark}{Remark}
\numberwithin{equation}{section}
\newcommand{\lc}
{\mathrel{\raise2pt\hbox{${\mathop<\limits_{\raise1pt\hbox
{\mbox{$\sim$}}}}$}}}
\newcommand{\gc}
{\mathrel{\raise2pt\hbox{${\mathop>\limits_{\raise1pt\hbox{\mbox{$\sim$}}}}$}}}
\newcommand{\ec}
{\mathrel{\raise2pt\hbox{${\mathop=\limits_{\raise1pt\hbox{\mbox{$\sim$}}}}$}}}
\def\bb{\begin{equation}} \def\ee{\end{equation}}
\def\beqn{\begin{eqnarray}}  \def\eqn{\end{eqnarray}}
\def\beqnx{\begin{eqnarray*}} \def\eqnx{\end{eqnarray*}}
\def\bn{\begin{enumerate}} \def\en{\end{enumerate}}
\def\bd{\begin{description}} \def\ed{\end{description}}
\def\nb{\nonumber}
\def\label{\label}
\def \ra {\rightarrow}
\def \o  {\overline}
\def \p  {\partial}
\def \l  {\lambda}
\def \d  {\displaystyle}
\def \f  {\frac}
\def \e  {\varepsilon}
\newcommand{\OCP}{\bf (OCP)}
\newcommand{\IOCPn}{\bf (IOCP)_n}
\newcommand{\SOCPn}{\bf (SOCP)_n}
\renewcommand{\leq}{\leqslant}
\renewcommand{\geq}{\geqslant}
\title{Impulse and sampled-data optimal control of heat equations, and error estimates}
\author{Emmanuel Tr\'{e}lat\thanks{Sorbonne Universit\'{e}s,
UPMC Univ Paris 06, CNRS UMR 7598, Laboratoire Jacques-Louis Lions, Institut Universitaire de France, F-75005, Paris, France. (emmanuel.trelat@upmc.fr).}
\and
Lijuan Wang\thanks{School of Mathematics and Statistics,
Computational Science Hubei Key Laboratory, Wuhan University, Wuhan
430072, China. (ljwang.math@whu.edu.cn).}
\and
Yubiao Zhang\thanks{
School of Mathematics and Statistics, Wuhan University, Wuhan
430072, China. (yubiao\b{ }zhang@whu.edu.cn).} }
\begin{document}

\date{}

\maketitle

\begin{abstract}
We consider the optimal control problem of minimizing some quadratic functional over all possible solutions of an internally controlled multi-dimensional heat equation with a periodic terminal state constraint.
This problem has a unique optimal solution, which can be characterized by an optimality system derived from the Pontryagin maximum principle.
We define two approximations of this optimal control problem. The first one is an impulse approximation, and consists of considering a system of linear heat equations with impulse control. The second one is obtained by the sample-and-hold procedure applied to the control, resulting into a sampled-data approximation of the controlled heat equation.
We prove that both problems have a unique optimal solution, and we establish precise error estimates for the optimal controls and optimal states of the initial problem with respect to its impulse and sampled-data approximations.
\end{abstract}

\medskip

\noindent\textbf{2010 Mathematics Subject Classifications.}
35K05, 
49J20, 
34A37, 
93C57 

\medskip

\noindent\textbf{Keywords.}
Heat equation, optimal control problem, impulse control, sampled-data control, error estimates.

\section{Introduction and main results}
\subsection{The context}
There is a vast literature on numerical approximations of optimal control problems settled for parabolic differential equations. In the linear quadratic regulator (LQR) problem, many results do exist concerning space semi-discretizations of the Riccati procedure. 
We refer to \cite{Banks-Ito, Banks-Kunisch, Gibson, KappelSalamon, LiuZheng, RosenWang} for general results showing convergence of the approximations of the Riccati operator, under assumptions of uniform exponential stabilizability, and of uniform boundedness of the sequence of approximate Riccati solutions.
In \cite{Banks-Ito, LasieckaTriggiani1, LiuZheng}, these sufficient conditions (and thus, the convergence result) are proved to hold true in the general parabolic case and for unbounded control operators.
Note that, in such LQR problems, the final point is not fixed. When there is a terminal constraint the situation is more intricate, because things may go badly when discretizing optimal control problems in infinite dimension, due to interferences with the mesh that may cause the divergence of the optimization procedure when the mesh size is going to zero. These interferences are stronger when the terminal constraint has infinite codimension, in spite of strong dissipativity properties of parabolic equations.
For the optimal control problem of minimizing the $L^2$ norm of the control (corresponding to the celebrated ``Hilbert Uniqueness Method"), one can find results on uniform exact controllability and/or observability of discretized control systems in \cite{Boyer, Boyer2, ErvedozaValein, ErvedozaZuazua, Labbe-Trelat, Zuazua-4} (see also references therein), for different discretization processes on different parabolic models. It can be noted that uniformity requires in general to add some appropriate viscosity terms in the numerical scheme.
Besides, when the convergence is ensured, it is important to be able to derive error estimates which are as sharp as possible, and we refer the reader to \cite{Falk, Gunzburger, Geveci, Hou, Lasie, Lasiecka, Liu, Tiba, WangWang, Wang} for situations where Galerkin finite element approximations are used.

In many cases impulse control is an interesting alternative, not only to usual discretization schemes, but also in order to deal with systems that cannot be acted on by means of continuous control inputs, as it often occurs in applications.
For example, relevant controls for acting on a population of bacteria should be impulsive, so that the density of the bactericide may change instantaneously; indeed continuous control would enhance drug resistance of bacteria.
For more discussions and examples about impulse control or impulse control problems in infinite dimension, we refer the readers to \cite{Bensoussan, Yong, Yang} and references therein.
It is also interesting to note that impulse control is as well an alternative to the well known concept of digital control, or sampled-data control, which is much used in the engineering community.

To the best of our knowledge, error estimates for impulse approximations or for sampled-data approximations of an optimal control problem settled with partial differential equations and with continuous control inputs have not been investigated.

In this paper, we consider the problem of deriving precise error estimates for impulse
approximations and for sampled-data approximations of a linear quadratic optimal control problem settled for an internally controlled linear homogeneous heat equation with periodic terminal state constraint. The latter periodicity requirement is motivated by the fact that steady solutions and periodic solutions are of particular interest when considering parabolic differential equations.

\subsection{Definition of the optimal control problems}\label{sec12}
Let $N\geq 1$ be an integer, let $\Omega\subset \mathbb{R}^N$ be a bounded open set having a $C^2$ boundary $\p \Omega$, let $\omega\subset \Omega$ be an open non-empty subset, and let $T>0$ and $y_d\in L^2(0,T;L^2(\Omega))$ be arbitrary.
Throughout the paper, the norm in $L^2(\Omega)$ is denoted by $\Vert\ \Vert$.

\paragraph{The optimal control problem $\OCP$.}
We consider the optimal control problem $\OCP$ of minimizing the functional
\begin{equation}\label{def_J}
J(y,u)=\f{1}{2}\int_0^T\Vert y-y_d\Vert^2\,\mathrm dt+\f{1}{2}\int_0^T\Vert u\Vert^2\,\mathrm dt
\end{equation}
over all $(y,u)\in L^2(0,T;H^2(\Omega)\cap H_0^1(\Omega)) \cap H^1(0,T;L^2(\Omega))   \times L^2(0,T;L^2(\Omega))$ such that
\begin{eqnarray}\label{eq_pb}
\left\{\begin{array}{lll}
\p_t y-\triangle y=\chi_\omega u&\mbox{in}&\Omega \times
(0,T),\\
y=0&\mbox{on}&\p \Omega \times (0,T),\\
y(0)=y(T)&\mbox{in}&\Omega .
\end{array}\right.\end{eqnarray}
Here, $\chi_\omega$ designates the characteristic function of $\omega$, and $y$ and $u$ are functions of $(t,x)$.

We have the following facts:
\begin{itemize}
\item Given any $u\in L^2(0,T;L^2(\Omega))$, there exists a unique solution $y\in L^2(0,T;H^2(\Omega)\cap H_0^1(\Omega)) \cap H^1(0,T;L^2(\Omega))$ of \eqref{eq_pb}.
\item The problem $\OCP$ has a unique optimal solution $(y^*,u^*)$.
\item By the Pontryagin maximum principle (in short, PMP; see \cite{LiYong}), which is here a necessary and sufficient condition for optimality because the problem is linear quadratic, this minimizer is characterized by the existence of $p^* \in H^1(0,T;L^2(\Omega))\cap L^2(0,T;H^2(\Omega)\cap H_0^1(\Omega))$ such that
\begin{equation}
\left\{
\begin{array}{lll}
\p_t y^*-\triangle y^*=\chi_\omega u^*&\mbox{in}&\Omega \times
(0,T),\\
y^*=0&\mbox{on}&\p \Omega \times (0,T),\\
y^*(0)=y^*(T)&\mbox{in}&\Omega ,
\end{array}\right.\label{Maxi-1}
\end{equation}
\begin{equation}
\left\{
\begin{array}{lll}
\p_t p^*+\triangle p^*=y^*-y_d&\mbox{in}&\Omega \times
(0,T),\\
p^*=0&\mbox{on}&\p \Omega \times (0,T),\\
p^*(0)=p^*(T)&\mbox{in}&\Omega ,
\end{array}\right.\label{Maxi-2}
\end{equation}
and
\begin{equation}\label{Maxi-3}
u^*=\chi_\omega p^*\;\;\mbox{in}\;\;\Omega\times (0,T).
\end{equation}
\end{itemize}
These three claims are easy to establish, but for completeness they are proved in Section \ref{sec21}.

We are next going to design an approximating impulse optimal control problem $\IOCPn$, and an approximating sampled-data optimal control problem $\SOCPn$, for a linear heat equation with periodic terminal state constraint. Both problems have as well a unique solution, to which we will apply the PMP. We will then establish error estimates between the optimal solutions of $\OCP$ and, respectively, $\IOCPn$ and $\SOCPn$.

\paragraph{The approximating impulse optimal control problem $\IOCPn$.}
Let us define the approximating impulse optimal control problem $\IOCPn$, for $n\geq 2$.
We set
\begin{align*}
& h_n= T/n, \qquad \tau_i= i\,h_n, \qquad i=0, 1, \ldots, n,\\
& X = \prod_{i=1}^n X_i,\qquad  X_i= L^2(\tau_{i-1},\tau_i;H_0^1(\Omega))\cap H^1(\tau_{i-1},\tau_i;H^{-1}(\Omega)),\qquad i=1, 2, \ldots, n,
\end{align*}
and we define the functional $J_n:X\times (L^2(\Omega))^{n-1}\rightarrow [0,+\infty)$ by
\begin{equation}\label{Penalty}
J_n(Y_n,U_n)= \f{1}{2}\left(\sum_{i=1}^n \int_{\tau_{i-1}}^{\tau_i}\|y_{i,n}-y_d\|^2\,\mathrm dt+
\f{1}{h_n}\sum_{i=2}^n \|u_{i-1,n}\|^2\right),
\end{equation}
for $Y_n=(y_{1,n},y_{2,n},\ldots,y_{n,n})\in X$ and
$U_n=(u_{1,n},u_{2,n},\ldots,u_{n-1,n})\in (L^2(\Omega))^{n-1}$.
Here and throughout, $\|\cdot\|$ designates the norm in $L^2(\Omega)$. Accordingly, the inner product is denoted by $\langle\cdot,\cdot\rangle$.

We consider the impulse optimal control problem $\IOCPn$, consisting of minimizing the functional $J_n$ over all possible $(Y_n,U_n)\in X\times (L^2(\Omega))^{n-1}$ such that
\begin{equation}\label{impulse}\left\{\begin{array}{lll}
\p_t y_{i,n}-\triangle y_{i,n}=0&\mbox{in}&\Omega\times
(\tau_{i-1},\tau_i),\quad 1\leq i\leq n,\\
y_{i,n}=0&\mbox{on}&\p\Omega\times (\tau_{i-1},\tau_i),\quad 1\leq i\leq n,\\
y_{i,n}(\tau_{i-1})=y_{i-1,n}(\tau_{i-1})+\chi_\omega u_{i-1,n}&\mbox{in}&\Omega,\quad 2\leq i\leq n,\\
y_{1,n}(0)=y_{n,n}(T)&\mbox{in}&\Omega.
\end{array}\right.\end{equation}

\begin{proposition}\label{Principle}
For every $n\geq 2$, the optimal control problem $\IOCPn$ has a unique solution $(Y_n^*,U_n^*)$, with $Y_n^*=(y_{1,n}^*,y_{2,n}^*,\ldots,y_{n,n}^*)$ and $U_n^*=(u_{1,n}^*,u_{2,n}^*,\ldots,u_{n-1,n}^*)$.
The optimal solution $(Y_n^*,U_n^*)$ of $\IOCPn$ is characterized by the existence of
$p_n^*\in L^2(0,T;H^2(\Omega)\cap H_0^1(\Omega))\cap H^1(0,T;L^2(\Omega))$
such that
\begin{equation}\label{Prin-1}
\left\{\begin{array}{lll}
\p_t y_{i,n}^*-\triangle y_{i,n}^*=0&\mbox{in}&\Omega\times
(\tau_{i-1},\tau_i),\quad 1\leq i\leq n,\\
y_{i,n}^*=0&\mbox{on}&\p\Omega\times (\tau_{i-1},\tau_i),\quad 1\leq i\leq n,\\
y_{i,n}^*(\tau_{i-1})=y_{i-1,n}^*(\tau_{i-1})+\chi_\omega u_{i-1,n}^*&\mbox{in}&\Omega,\quad 2\leq i\leq n,\\
y_{1,n}^*(0)=y_{n,n}^*(T)&\mbox{in}&\Omega,
\end{array}\right.
\end{equation}
\begin{equation}
\left\{
\begin{array}{lll}
\p_t p_n^*+\triangle p_n^*=y_n^*-y_d&\mbox{in}&\Omega\times
(0,T),\\
p_n^*=0&\mbox{on}&\p \Omega\times (0,T),\\
p_n^*(0)=p_n^*(T)&\mbox{in}&\Omega ,
\end{array}\right.\label{Prin-2}
\end{equation}
and
\begin{equation}\label{Prin-3}
u_{i-1,n}^*=h_n\chi_\omega p_n^*(\tau_{i-1}),\;\;2\leq i\leq n,
\end{equation}
with
\begin{equation}\label{Prin-4}
y_n^*(0)= y_n^*(T),
\end{equation}
where $y_n^*\in L^\infty(0,T;L^2(\Omega))$ is defined by
\begin{equation}\label{def_yn*}
y_n^*(t)= y_{i,n}^*(t),\;\;t\in (\tau_{i-1},\tau_i],\;\;1\leq i\leq n,
\end{equation}
and $y_{i,n}^*\in C([\tau_{i-1},\tau_i]; L^2(\Omega))$, $1\leq i\leq n$.
\end{proposition}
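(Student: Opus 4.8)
The plan is to reduce $\IOCPn$ to an unconstrained strictly convex quadratic minimization over the control variable $U_n\in(L^2(\Omega))^{n-1}$, deduce existence and uniqueness from convexity, and then obtain the optimality system by a duality computation that carefully tracks the impulsive jumps of the state.

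First I would show that the state depends linearly on the control. Let $\{S(t)\}_{t\geq0}$ be the Dirichlet heat semigroup on $L^2(\Omega)$, so that on each subinterval $y_{i,n}(t)=S(t-\tau_{i-1})\,y_{i,n}(\tau_{i-1})$. Writing $z_i=y_{i,n}(\tau_{i-1})$, the constraints in \eqref{impulse} become the recursion $z_i=S(h_n)z_{i-1}+\chi_\omega u_{i-1,n}$ for $2\leq i\leq n$, together with $z_1=S(h_n)z_n$. Iterating and inserting into the periodicity relation yields
\begin{equation*}
(I-S(T))\,z_1=\sum_{j=1}^{n-1}S(h_n)^{\,n-j}\chi_\omega u_{j,n},
\end{equation*}
where I used $S(h_n)^n=S(T)$. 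Since $S(T)$ is self-adjoint with operator norm $e^{-\lambda_1 T}<1$ ($\lambda_1>0$ being the first Dirichlet eigenvalue), $I-S(T)$ is boundedly invertible by the Neumann series; hence $z_1$, and then all the $z_i$ and all the pieces $y_{i,n}$, are uniquely and linearly determined by $U_n$, with the regularity $y_{i,n}\in X_i\cap C([\tau_{i-1},\tau_i];L^2(\Omega))$ following from standard parabolic theory.

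Substituting this state-control map into \eqref{Penalty} turns $J_n$ into a quadratic functional of $U_n$ alone. It is continuous and, because of the penalty term $\tfrac{1}{2h_n}\sum_{i=2}^n\|u_{i-1,n}\|^2$, coercive and strictly convex; the direct method (equivalently, the projection theorem for quadratic functionals on a Hilbert space) then gives a unique minimizer $U_n^*$ and hence a unique optimal pair $(Y_n^*,U_n^*)$ satisfying \eqref{Prin-1}. Next I introduce the adjoint state $p_n^*$ as the unique solution of \eqref{Prin-2}; its well-posedness follows from the same invertibility argument applied to the time-reversed equation, and, unlike the state, $p_n^*$ solves a single backward heat equation on $(0,T)$ without jumps, so $p_n^*\in H^1(0,T;L^2(\Omega))\hookrightarrow C([0,T];L^2(\Omega))$ and the values $p_n^*(\tau_{i-1})$ are well defined. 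The heart of the argument is the first-order condition $DJ_n(U_n^*)\cdot V_n=0$. Denoting by $\delta Y_n$ the state variation produced by a control variation $V_n=(v_{1,n},\dots,v_{n-1,n})$, I would compute on each subinterval, using the two equations together with Green's formula (the Laplacian terms cancel since both functions vanish on $\p\Omega$),
\begin{equation*}
\frac{d}{dt}\langle p_n^*,\delta y_{i,n}\rangle=\langle y_{i,n}^*-y_d,\delta y_{i,n}\rangle .
\end{equation*}
Integrating over each $(\tau_{i-1},\tau_i)$ and summing over $i$, the interval integrals reproduce the tracking part of $DJ_n$, while the boundary terms collapse: at each interior node $\tau_k$ the jump relation $\delta y_{k+1,n}(\tau_k)=\delta y_{k,n}(\tau_k)+\chi_\omega v_{k,n}$ leaves exactly $-\langle\chi_\omega p_n^*(\tau_k),v_{k,n}\rangle$, whereas the contributions at $t=0$ and $t=T$ cancel thanks to the two periodicity conditions $p_n^*(0)=p_n^*(T)$ and $\delta y_{1,n}(0)=\delta y_{n,n}(T)$. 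The first-order condition thus becomes $\sum_{k=1}^{n-1}\langle \tfrac{1}{h_n}u_{k,n}^*-\chi_\omega p_n^*(\tau_k),v_{k,n}\rangle=0$ for all $V_n$, which is precisely \eqref{Prin-3}; sufficiency is automatic by convexity, so this system indeed characterizes the minimizer, and \eqref{Prin-4} is just a restatement of the periodicity in \eqref{Prin-1} through the definition \eqref{def_yn*}.

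The step I expect to be most delicate is this duality computation: one must match the impulsive jumps of the discontinuous state against the continuous adjoint and verify that precisely the two endpoint terms — and nothing else — are annihilated by the periodicity conditions. It is exactly this interplay that produces the sampled feedback law $u_{i-1,n}^*=h_n\chi_\omega p_n^*(\tau_{i-1})$, with the weight $h_n$ inherited from the $\tfrac{1}{h_n}$ factor in $J_n$.
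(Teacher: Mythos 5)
Your proposal is correct, and its core -- the derivation of the optimality system -- is essentially the computation in the paper: you perturb the optimal control, pair the linearized state with the periodic adjoint $p_n^*$ solving \eqref{Prin-2}, and your telescoping of boundary terms (each interior jump leaving $-\langle\chi_\omega p_n^*(\tau_k),v_{k,n}\rangle$, the two endpoint terms cancelling via the periodicity of $p_n^*$ and of the linearized state) is exactly the identity \eqref{Prin-9} of the paper; concluding sufficiency from convexity is also how the paper closes the characterization, though it spells the inequality out. Where you genuinely deviate is existence and uniqueness. The paper runs a minimizing-sequence argument: it uses the semigroup representation \eqref{Unique-3} and the invertibility of $I-e^{T\triangle}$ to bound the periodic initial datum along the sequence, derives energy bounds for each piece, extracts weak limits, and passes to the limit in the constraint and the cost, obtaining uniqueness only afterwards from strict convexity of the reduced functional $\widetilde{J}_n$. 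You instead invert $I-e^{T\triangle}$ (your $I-S(T)$) once and for all to exhibit the control-to-state map as a bounded linear operator, so that $J_n$ becomes an explicitly coercive, strictly convex, continuous quadratic functional of $U_n$ alone, and existence and uniqueness follow in one stroke from the projection theorem. Your route is shorter and makes uniqueness automatic; the paper's compactness argument is more robust, in that it would survive modifications (convex constraints on the controls, non-quadratic convex costs) for which no explicit quadratic reduction is available. Both arguments ultimately rest on the same two pillars: the bound $\|(I-e^{T\triangle})^{-1}\|_{\mathcal L(L^2(\Omega),L^2(\Omega))}\leq(1-e^{-\lambda_1 T})^{-1}$ and the convexity of the reduced cost, so the difference is one of packaging rather than of substance in the analytic input.
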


Proposition \ref{Principle} is proved in Section \ref{sec_proofPrinciple}.

\begin{remark}
We could also consider the corresponding impulse version of the optimality system \eqref{Maxi-1}-\eqref{Maxi-2}-\eqref{Maxi-3}, but then its well-posedness would be hard to prove, and therefore, obtaining error estimates in such a way seems difficult.
\end{remark}

\paragraph{The approximating sampled-data optimal control problem $\SOCPn$.}
Let us now define the approximating sampled-data optimal control problem $\SOCPn$, for $n\geq 2$, by performing the usual sample-and-hold procedure on the control function.
This consists of freezing the value of the control over a certain horizon of time, usually called sampling time. In other words, we replace the control function $u\in L^2(0,T;L^2(\Omega))$ with a control that is piecewise constant in time, with values in $L^2(\Omega)$.
We keep the same notations as in the definition of $\IOCPn$, and we assume that the sampling time is equal to $h_n=T/n$. Recall that we have set $\tau_i=i\, h_n$, for $i=0,\ldots,n$.
We consider the class of sampled-data controls $f_n\in L^2(0,T;L^2(\Omega))$ defined by
\begin{equation}\label{def_fn}
f_n(t)=v_{i,n},\quad \forall\, t\in (\tau_{i-1},\tau_i],\quad 1\leq i\leq n,
\end{equation}
where $v_{i,n}\in L^2(\Omega)$ for every $i\in\{1,\ldots,n\}$. This class of controls is therefore identified with $(L^2(\Omega))^n$.

Recall that the functional $J$ is defined by \eqref{def_J}.
We consider the sampled-data optimal control problem $\SOCPn$, consisting of minimizing the functional
$$
J(y_n,f_n)=\frac{1}{2}\left(\int_0^T\|y_n-y_d\|^2\,\mathrm dt+
h_n\sum_{i=1}^n \|v_{i,n}\|^2\right)
$$
over all $(y_n,V_n)\in L^2(0,T;H^2(\Omega)\cap H_0^1(\Omega)) \cap H^1(0,T;L^2(\Omega)) \times (L^2(\Omega))^n$, with $V_n=(v_{1,n},\dots,v_{n,n})$, such that
\begin{equation*}
\left\{
\begin{array}{lll}
\partial_t y_n-\triangle y_{n}=\chi_\omega f_n&\mbox{in}&\Omega\times(0,T),\\
y_n=0&\mbox{on}&\partial\Omega\times (0,T),\\
y_n(0)=y_n(T)&\mbox{in}&\Omega,
\end{array}\right.
\end{equation*}
where $f_n\in L^2(0,T;L^2(\Omega))$ is the sampled control defined by \eqref{def_fn}.

\begin{proposition}\label{PMP_sampled}
For every $n\geq 2$, the optimal control problem $\SOCPn$ has a unique solution $(\bar{y}_n^*,V_n^*)$, with $V_n^*=(v_{1,n}^*,\dots,v_{n,n}^*)$.
The optimal solution $(\bar{y}_n^*,V_n^*)$ of $\SOCPn$ is characterized by the existence of $\bar{p}_n^*\in L^2(0,T;H^2(\Omega)\cap H_0^1(\Omega))\cap H^1(0,T;L^2(\Omega))$
such that
\begin{equation}\label{PMP_SD1}
\left\{
\begin{array}{lll}
\partial_t \bar{y}_n^*-\triangle \bar{y}_n^*=\chi_\omega f_n^*&\mbox{in}&\Omega\times (0,T),\\
\bar{y}_n^*=0&\mbox{on}&\partial\Omega\times (0,T),\\
\bar{y}_n^*(0)=\bar{y}_n^*(T)&\mbox{in}&\Omega,
\end{array}\right.
\end{equation}
\begin{equation}\label{PMP_SD2}
\left\{
\begin{array}{lll}
\partial_t \bar{p}_n^*+\triangle \bar{p}_n^*=\bar{y}_n^*-y_d&\mbox{in}&\Omega\times (0,T),\\
\bar{p}_n^*=0&\mbox{on}&\partial\Omega\times (0,T),\\
\bar{p}_n^*(0)=\bar{p}_n^*(T)&\mbox{in}&\Omega ,
\end{array}\right.
\end{equation}
and
\begin{equation}\label{PMP_SD3}
v_{i,n}^*=\frac{1}{h_n}\chi_\omega\int_{\tau_{i-1}}^{\tau_i} \bar{p}_n^*(t)\,\mathrm dt,\;\;\;1\leq i\leq n,
\end{equation}
where $f_n^*\in L^2(0,T;L^2(\Omega))$ is the (optimal) sampled-data control given by
\begin{equation}\label{def_fn*}
f_n^*(t)=v_{i,n}^*,~\forall\, t\in (\tau_{i-1},\tau_i],~1\leq i\leq n.
\end{equation}
\end{proposition}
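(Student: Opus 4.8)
The plan is to follow the same linear-quadratic strategy used for \OCP\ and \ref{Principle}, exploiting the fact that, once the state is eliminated, the problem is an unconstrained strictly convex minimization over the Hilbert space $(L^2(\Omega))^n$. First I would check that for every $V_n=(v_{1,n},\dots,v_{n,n})\in(L^2(\Omega))^n$, with associated sampled control $f_n$ given by \eqref{def_fn}, the periodic heat equation \eqref{PMP_SD1} has a unique solution. Writing $S(t)=e^{t\triangle}$ for the Dirichlet heat semigroup, the periodicity constraint $\bar y_n(0)=\bar y_n(T)$ is equivalent to
\[
(I-S(T))\,\bar y_n(0)=\int_0^T S(T-s)\chi_\omega f_n(s)\,\mathrm ds .
\]
Since the Dirichlet Laplacian has spectrum bounded above by $-\lambda_1<0$, the operator $S(T)$ is a strict contraction on $L^2(\Omega)$, hence $I-S(T)$ is boundedly invertible and $\bar y_n(0)$, and therefore $\bar y_n$, is uniquely determined by $V_n$. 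The control-to-state map $V_n\mapsto\bar y_n$ is thus well-defined, linear and continuous, so $J$ may be regarded as a functional of $V_n$ alone. Existence and uniqueness of the minimizer $(\bar y_n^*,V_n^*)$ then follow from convex analysis: the reduced functional is the sum of $\frac12\int_0^T\|\bar y_n-y_d\|^2\,\mathrm dt$, convex in $V_n$ through the linear map, and of the strictly convex and coercive term $\frac{h_n}{2}\sum_{i=1}^n\|v_{i,n}\|^2$; it is therefore strictly convex, coercive and weakly lower semicontinuous on $(L^2(\Omega))^n$.

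To derive the optimality system I would compute the Gateaux derivative of $J$ at $V_n^*$. Perturbing $V_n^*$ by $\varepsilon W_n$, with $W_n=(w_{1,n},\dots,w_{n,n})$ and associated sampled control $g_n$, linearity gives a state variation $z_n$ solving $\p_t z_n-\triangle z_n=\chi_\omega g_n$ in $\Omega\times(0,T)$, with $z_n=0$ on $\p\Omega\times(0,T)$ and $z_n(0)=z_n(T)$, and
\[
DJ(V_n^*)\cdot W_n=\int_0^T\langle \bar y_n^*-y_d,z_n\rangle\,\mathrm dt+h_n\sum_{i=1}^n\langle v_{i,n}^*,w_{i,n}\rangle .
\]
I would introduce the adjoint state $\bar p_n^*$ as the unique solution of the periodic backward equation \eqref{PMP_SD2}, whose well-posedness is obtained exactly as above by applying the contraction argument to the time-reversed semigroup. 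Substituting $\bar y_n^*-y_d=\p_t\bar p_n^*+\triangle\bar p_n^*$ and integrating by parts in both time and space, the spatial boundary terms vanish by the Dirichlet conditions and the temporal boundary term $[\langle\bar p_n^*,z_n\rangle]_0^T$ vanishes because both $\bar p_n^*$ and $z_n$ are $T$-periodic. This leaves
\[
\int_0^T\langle \bar y_n^*-y_d,z_n\rangle\,\mathrm dt=-\int_0^T\langle\chi_\omega\bar p_n^*,g_n\rangle\,\mathrm dt=-\sum_{i=1}^n\Big\langle\int_{\tau_{i-1}}^{\tau_i}\chi_\omega\bar p_n^*(t)\,\mathrm dt,\,w_{i,n}\Big\rangle,
\]
where the last equality uses that $g_n$ equals the constant $w_{i,n}$ on each $(\tau_{i-1},\tau_i]$. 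Requiring $DJ(V_n^*)\cdot W_n=0$ for every $W_n$ then forces $h_n v_{i,n}^*=\int_{\tau_{i-1}}^{\tau_i}\chi_\omega\bar p_n^*(t)\,\mathrm dt$ for each $i$, which is precisely \eqref{PMP_SD3}. Conversely, strict convexity ensures that any triple solving \eqref{PMP_SD1}--\eqref{PMP_SD2}--\eqref{PMP_SD3} is the unique minimizer, so the characterization is both necessary and sufficient.

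I expect the main delicate point to be the careful handling of the periodic boundary-value problems rather than any deep obstruction, since, in contrast with the impulse case alluded to in the remark after Proposition \ref{Principle}, here the control enters as a genuine distributed source and the control-to-state map is the honest heat semigroup. The regularity afforded by the spaces $L^2(0,T;H^2(\Omega)\cap H_0^1(\Omega))\cap H^1(0,T;L^2(\Omega))$ guarantees that $\bar p_n^*$ and $z_n$ are continuous in time with values in $L^2(\Omega)$, so the temporal boundary term is well-defined and its cancellation by periodicity is rigorous. The one place requiring genuine care is the passage from the continuous-time derivative to the discrete condition \eqref{PMP_SD3}: the piecewise-constant structure of $g_n$ must be used to turn the time integral over $\Omega\times(0,T)$ into the per-interval averages $\frac{1}{h_n}\int_{\tau_{i-1}}^{\tau_i}\bar p_n^*$, which is exactly what produces the sampled feedback law and what distinguishes this optimality system from the impulse one \eqref{Prin-3}.
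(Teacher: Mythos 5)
Your proposal is correct and follows essentially the same route as the paper: the paper omits the proof of Proposition \ref{PMP_sampled} precisely because it mirrors that of Proposition \ref{Principle}, which likewise establishes well-posedness of the periodic problem via the invertibility of $I-e^{T\triangle}$ (Lemma \ref{Energy}), existence and uniqueness by the direct method together with strict convexity of the reduced functional in the control, the optimality system by a perturbation argument combined with the periodic adjoint state and integration by parts (where the temporal boundary term cancels by periodicity), and sufficiency by convexity. The only cosmetic difference is that you quote the standard convex-analysis existence theorem on $(L^2(\Omega))^n$ where the paper runs the minimizing-sequence argument explicitly; the content is the same.
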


Since the proof of Proposition \ref{PMP_sampled} is similar to the one of Proposition \ref{Principle}, we do not provide any proof in the present paper.
It is interesting to note that the optimal sampled-data control $f_n^*$, defined by \eqref{def_fn*}, is given by time-averages of the adjoint state $\bar{p}_n^*$ over the time-subdivision defined by the sampling time $h_n$ (see \eqref{PMP_SD3}). This fact has been proved as well in a more general context in \cite{BourdinTrelat}.

\begin{remark}\label{rem_sampled}
It is clear that the sampled-data optimal control problem $\SOCPn$ may be considered as an approximate version of $\OCP$, but it is less clear, at least intuitively, for the impulse optimal control problem $\IOCPn$. Before establishing precise error estimates in the next section, let us provide a first intuitive explanation.
Firstly, any continuously distributed control may be discretized by the sample-and-hold procedure, leading to the sampled-data control $\sum_{i=1}^n \chi_{((i-1)h_n,i\,h_n)} v_i$ with $v_i\in L^2(\Omega)$. Secondly, this sampled-data control can be seen as an approximation of an impulsive control, in the sense that
$$
\frac{1}{h_n} \chi_{(0,h_n)}\rightarrow\delta_{\{t=0\}}
$$
in the distributional sense, where $\delta_{\{t=0\}}$ is the Dirac mass at $t=0$ (note that we have as well the convergence of the corresponding solutions, see Lemma \ref{compare:3} further).
Denoting by $y(u)$ the solution of \eqref{eq_pb}, we have, noting that $u_{i-1,n}\simeq h_n u(i\,h_n)$, for $i=2,\dots,n$,
$$
y(u)\simeq y\left(\sum_{i=1}^n \chi_{((i-1)h_n,i\,h_n)}(\cdot) u(i\,h_n)\right)
  \simeq y\left(\sum_{i=2}^n \delta_{\{t= (i-1)\, h_n\}} h_n u(i\,h_n)\right)
  \simeq y\left(\sum_{i=2}^n \delta_{\{t= (i-1)\, h_n\}} u_{i-1,n}\right)
$$
and
$$
 \|u\|_{L^2(0,T;L^2(\Omega))}^2 \simeq \sum_{i=1}^n h_n \|u(i\,h_n)\|^2
 \simeq\frac{1}{h_n}\sum_{i=2}^n \|u_{i-1,n}\|^2.
$$
Here, $y\left(\sum_{i=2}^n \delta_{\{t= (i-1)\, h_n\}} u_{i-1,n}\right)$ is the corresponding impulsive solution, and we have moreover $J(y,u)\simeq J_n(Y_n,U_n)$.
\end{remark}

\subsection{Error estimates}
We keep all notations introduced in Section \ref{sec12}.
The main results of the paper are the following.

\paragraph{Error estimates for the impulse approximation.}

\begin{theorem}\label{Theorem-Error}
We set
\begin{equation}\label{error-2}
u_n^*(t)= \frac{1}{h_n} u_{i-1,n}^*,\quad t\in (\tau_{i-1},\tau_i],\;\;1\leq i\leq n,
\quad \;\;u_{0,n}^*= 0.
\end{equation}
Then there exists $C(T)>0$ such that
\begin{equation}\label{error_estim_control}
\|u^*-u_n^*\|_{L^2(0,T;L^2(\Omega))}\leq C(T) h_n^{1/2} \|y_d\|_{L^2(0,T;L^2(\Omega))}  ,
\end{equation}
and
\begin{equation}\label{wang-1}
|J_n(Y_n^*,U_n^*)-J(y^*,u^*)|\leq C(T) h_n^{1/2} \|y_d\|_{L^2(0,T;L^2(\Omega))}^2 .
\end{equation}
For every $p\in[2,+\infty)$, there exists $C(T,p)$ such that
\begin{equation}\label{Lp-estimate:1}
\|y^*-y_n^*\|_{L^p(0,T;L^2(\Omega))} \leq C(T,p) h_n^{1/p} \|y_d\|_{L^2(0,T;L^2(\Omega))} ,\quad p\in [2,+\infty).
\end{equation}
Moreover, the constant $C(T)$ and $C(T,p)$ are independent of $n$ and of $y_d$.
\end{theorem}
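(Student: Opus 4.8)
The plan is to reduce everything to the two optimality systems \eqref{Maxi-2}--\eqref{Maxi-3} and \eqref{Prin-2}--\eqref{Prin-3} and to close the resulting circular estimates through a single coercive identity. First I would record uniform (in $n$) a priori bounds. Testing $J_n$ at the pair $(0,0)$ — which is admissible because the only $T$-periodic solution of the uncontrolled heat equation is $0$ — gives $J_n(Y_n^*,U_n^*)\leq\frac12\|y_d\|_{L^2(0,T;L^2(\Omega))}^2$, whence $\|y_n^*\|_{L^2(0,T;L^2(\Omega))}\lesssim\|y_d\|$, and, since $\frac1{h_n}\sum_{i=2}^n\|u_{i-1,n}^*\|^2=\|u_n^*\|_{L^2(0,T;L^2(\Omega))}^2$ by \eqref{error-2}, also $\|u_n^*\|_{L^2(0,T;L^2(\Omega))}\lesssim\|y_d\|$. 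Parabolic regularity applied to the periodic adjoint equation \eqref{Prin-2} then yields $\|p_n^*\|_{H^1(0,T;L^2)\cap L^2(0,T;H^2)}\lesssim\|y_d\|$, and analogously for the quantities of $\OCP$.

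Next I would rewrite the control error. By \eqref{Maxi-3}, $u^*=\chi_\omega p^*$, while \eqref{error-2} together with \eqref{Prin-3} gives $u_n^*=\chi_\omega\hat p_n$, where $\hat p_n(t):=p_n^*(\tau_{i-1})$ on $(\tau_{i-1},\tau_i]$ for $i\geq2$ and $\hat p_n:=0$ on $(0,h_n]$. Thus $u^*-u_n^*=\chi_\omega(p^*-\hat p_n)$, and I would split $p^*-\hat p_n=(p^*-p_n^*)-e$ with $e:=\hat p_n-p_n^*$ the adjoint discretization error. On $(h_n,T]$, $e$ is a left-endpoint interpolation error, of size $O(h_n)$ in $L^2$ thanks to the bound on $\partial_t p_n^*$; on the first subinterval $e=-p_n^*$, which contributes $\|e\|_{L^2(0,h_n;L^2)}\simeq h_n^{1/2}\|p_n^*(0)\|=O(h_n^{1/2}\|y_d\|)$. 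This first-interval discrepancy, forced by $u_{0,n}^*=0$, is the dominant and essentially irreducible source of the $h_n^{1/2}$ rate.

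To handle the state error I would introduce the distributed-control solution $z_n$ of the periodic heat equation with right-hand side $\chi_\omega u_n^*$, and set $w:=y^*-z_n$ and $r:=z_n-y_n^*$. Then $w$ solves the periodic heat equation with source $\chi_\omega(u^*-u_n^*)$, while $r$ measures the impulse-versus-distributed discrepancy and satisfies $\|r\|_{L^2(0,T;L^2)}\lesssim h_n\|y_d\|$ via the semigroup-smoothing comparison behind Lemma \ref{compare:3}, i.e. the convergence $\frac1{h_n}\chi_{(0,h_n)}\to\delta_{\{t=0\}}$ of Remark \ref{rem_sampled}. The crux is a duality/energy identity: pairing the equation for $w$ with $\delta p:=p^*-p_n^*$, which solves the periodic adjoint equation with source $y^*-y_n^*=w+r$, and integrating by parts in space–time — periodicity and the Dirichlet conditions annihilate all boundary terms — yields
\[
\|\chi_\omega\delta p\|_{L^2(0,T;L^2)}^2+\|w\|_{L^2(0,T;L^2)}^2=\int_0^T\langle\chi_\omega e,\delta p\rangle\,\mathrm dt-\int_0^T\langle w,r\rangle\,\mathrm dt .
\]

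The left-hand side is a sum of squares, which is exactly the coercivity coming from the nonnegativity of the control-to-state-to-adjoint map; this, rather than any contraction argument, is what lets the otherwise self-referential estimates close without a smallness assumption. Bounding the right-hand side with the adjoint estimate $\|\delta p\|\lesssim\|w\|+\|r\|$ and Young's inequality, I would absorb $\|w\|^2$ on the left to get $\|w\|,\|\chi_\omega\delta p\|\lesssim\|e\|+\|r\|\lesssim h_n^{1/2}\|y_d\|$, hence $\|u^*-u_n^*\|=\|\chi_\omega(\delta p-e)\|\lesssim h_n^{1/2}\|y_d\|$, which is \eqref{error_estim_control}. For \eqref{Lp-estimate:1}, the $L^2$ state error $\|y^*-y_n^*\|_{L^2}\leq\|w\|+\|r\|\lesssim h_n^{1/2}\|y_d\|$ is combined with the uniform bound $\|y^*-y_n^*\|_{L^\infty(0,T;L^2)}\lesssim\|y_d\|$ from the a priori estimates, and the interpolation inequality $\|\cdot\|_{L^p}\leq\|\cdot\|_{L^2}^{2/p}\|\cdot\|_{L^\infty}^{1-2/p}$ produces the exponent $1/p$. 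Finally \eqref{wang-1} follows by expanding $J_n(Y_n^*,U_n^*)-J(y^*,u^*)$ into its state-tracking and control-cost contributions and estimating each through the already-established $O(h_n^{1/2})$ state and control errors together with the $O(\|y_d\|)$ bounds. The main obstacle is precisely the energy identity and its closure: carrying out the duality computation correctly despite the impulsive (Dirac-sourced) state — which I circumvent by routing through the smooth intermediate solution $z_n$ — and recognizing that it is the coercive left-hand side that tames the circular dependence between the control, state and adjoint errors.
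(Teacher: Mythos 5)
Your central identity is correct, and it is a genuinely different packaging of the paper's argument. Writing $u^*-u_n^*=\chi_\omega(\delta p-e)$ with $\delta p=p^*-p_n^*$, $e=\hat p_n-p_n^*$, and pairing the equation for $w=y^*-z_n$ with $\delta p$ (periodicity and Dirichlet conditions killing all boundary terms) does yield
\[
\|\chi_\omega\delta p\|_{L^2(0,T;L^2(\Omega))}^2+\|w\|_{L^2(0,T;L^2(\Omega))}^2
=\int_0^T\langle\chi_\omega e,\delta p\rangle\,\mathrm dt-\int_0^T\langle w,r\rangle\,\mathrm dt ,
\]
and Young's inequality plus absorption then closes the circular dependence in one stroke, whereas the paper routes through the intermediate adjoint $p(u_n^*)$ and the splitting $I_1+I_2$ of Steps 1--3, with the sign of $I_1$ playing the role of your coercive left-hand side. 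Your interpolation $\|\cdot\|_{L^p}\leq\|\cdot\|_{L^2}^{2/p}\|\cdot\|_{L^\infty}^{1-2/p}$ is likewise a legitimate alternative to the paper's direct $L^p$ estimate, but note that the required uniform bound $\|y_n^*\|_{L^\infty(0,T;L^2(\Omega))}\leq C(T)\|y_d\|_{L^2(0,T;L^2(\Omega))}$ does \emph{not} follow from $J_n(Y_n^*,U_n^*)\leq J_n(0,0)$ alone (that gives only an $L^2$-in-time bound); it needs the representation of $y_n^*(0)$ through $(I-e^{T\triangle})^{-1}$ and semigroup contractivity, a short but necessary extra argument.

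The genuine gap is your treatment of $r=z_n-y_n^*$, which is precisely where the paper does its hardest work. First, the claimed rate $\|r\|_{L^2(0,T;L^2(\Omega))}\lesssim h_n\|y_d\|_{L^2(0,T;L^2(\Omega))}$ is wrong: Lemma \ref{compare:3} with $p=2$ gives $\delta^{1/2}$ per impulse, and summing over the $n$ impulses via Cauchy--Schwarz against $\frac{1}{h_n}\sum_{i=2}^n\|u_{i-1,n}^*\|^2\lesssim\|y_d\|^2$ yields only $h_n^{1/2}$ (the paper's footnote even indicates that $1/2$ is sharp in this comparison when $\omega=\Omega$, $p=2$); fortunately $h_n^{1/2}$ suffices for your closure. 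Second, and more seriously, the bound on $r$ is not a direct application of Lemma \ref{compare:3}: since $z_n$ and $y_n^*$ are \emph{both} subject to the periodicity constraint, they do not share the same value at $t=0$, so one must decompose $r(t)=e^{t\triangle}\bigl(z_n(0)-y_n^*(0)\bigr)+\sum_j\bigl(z_{j,n}(t)-w_{j,n}(t)\bigr)$ and estimate the initial-data mismatch separately. Controlling $\|e^{t\triangle}(z_n(0)-y_n^*(0))\|_{L^p(0,T;L^2(\Omega))}$ requires representing $z_n(0)-y_n^*(0)$ through $(I-e^{T\triangle})^{-1}$ applied to a sum of exactly the single-impulse discrepancies handled by Lemma \ref{compare:3} on the extended interval $(\tau_{j-1},2T)$; this is the content of the paper's Sub-steps 3.2 and 3.3. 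Your proposal asserts the bound on $r$ as if it were immediate from the lemma; without this step the argument is incomplete, although it can be completed by exactly the paper's technique and then everything else you wrote goes through.
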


Theorem \ref{Theorem-Error} is proved in Section \ref{sec_proof_Theorem-Error}.
Note that we have assumed that $p<+\infty$ in the statement. If $p=+\infty$ then the situation is more complicated, and we have the following result.

\begin{theorem}\label{infty-estimate}
We assume that the subset $\omega$ of $\Omega$ has a $C^2$ boundary. Let $q\in (1,+\infty)$ be arbitrary.
If $\omega\not=\Omega$, then
\begin{equation}\label{infty-estimate:1}
\|y^*-y_n^*\|_{L^\infty(0,T;L^2(\Omega))}\leq\left\{
\begin{array}{lll}
C(T) h_n^{1/2N} \|y_d\|_{L^2(0,T;L^2(\Omega))} &\mbox{for}&N\geq 3,\\
C(T,q) h_n^{1/4q} \|y_d\|_{L^2(0,T;L^2(\Omega))} &\mbox{for}&N=2,\\
C(T) h_n^{1/4} \|y_d\|_{L^2(0,T;L^2(\Omega))} &\mbox{for}&N=1.
\end{array}\right.
\end{equation}
If $\omega=\Omega$, then
\begin{equation*}
\|y^*-y_n^*\|_{L^\infty(0,T;L^2(\Omega))}\leq C(T) h_n^{1/2} \|y_d\|_{L^2(0,T;L^2(\Omega))} .
\end{equation*}
The constant $C(T)$ and $C(T,q)$ are independent of $n$ and of $y_d$.
\end{theorem}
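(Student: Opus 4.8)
The plan is to prove the stronger, \emph{dimension-free} bound
$\|y^*-y_n^*\|_{L^\infty(0,T;L^2(\Omega))}\leq C(T)h_n^{1/2}\|y_d\|_{L^2(0,T;L^2(\Omega))}$,
from which every inequality in the statement follows: since $h_n=T/n\leq T/2$ and $h_n^{1/2}=h_n^{1/2N}\,h_n^{1/2-1/2N}\leq (T/2)^{1/2-1/2N}h_n^{1/2N}$, and likewise $h_n^{1/2}\leq (T/2)^{1/2-1/4q}h_n^{1/4q}$ and $h_n^{1/2}\leq (T/2)^{1/4}h_n^{1/4}$. Rather than interpolating \eqref{Lp-estimate:1} up to $p=\infty$, I would compare $y_n^*$ with a genuinely (continuously) controlled heat flow. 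Let $S(\cdot)$ be the Dirichlet heat semigroup on $L^2(\Omega)$ and $\lambda_1>0$ the first Dirichlet eigenvalue, so $\|S(t)\|\leq e^{-\lambda_1 t}$ and $(I-S(T))^{-1}$ is bounded. Introduce the auxiliary state $\tilde y_n$, the unique $T$-periodic solution of $\p_t\tilde y_n-\triangle\tilde y_n=\chi_\omega u_n^*$ in $\Omega\times(0,T)$ with $\tilde y_n=0$ on $\p\Omega\times(0,T)$, where $u_n^*$ is the sampled-data control of \eqref{error-2} (piecewise constant, equal to $\chi_\omega p_n^*(\tau_{i-1})$ on $(\tau_{i-1},\tau_i]$). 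Then split $y^*-y_n^*=(y^*-\tilde y_n)+(\tilde y_n-y_n^*)$.

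For the first piece, $w:=y^*-\tilde y_n$ solves the heat equation with right-hand side $\chi_\omega(u^*-u_n^*)$ and $w(0)=w(T)$. From $w(t)=S(t)w(0)+\int_0^t S(t-s)\chi_\omega(u^*-u_n^*)(s)\,\mathrm ds$ together with $w(0)=(I-S(T))^{-1}\int_0^T S(T-s)\chi_\omega(u^*-u_n^*)(s)\,\mathrm ds$, the contractivity of $S$ and the Cauchy--Schwarz inequality in time give $\|w\|_{L^\infty(0,T;L^2(\Omega))}\leq C(T)\|u^*-u_n^*\|_{L^2(0,T;L^2(\Omega))}$, which by \eqref{error_estim_control} is $\leq C(T)h_n^{1/2}\|y_d\|_{L^2(0,T;L^2(\Omega))}$. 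This step is dimension-free, uses no regularity of $\chi_\omega$, and already yields the dominant contribution at the optimal rate.

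The second piece $z:=\tilde y_n-y_n^*$ records only the difference between injecting, on each slab $(\tau_{i-1},\tau_i)$, the common control mass $h_n\chi_\omega p_n^*(\tau_{i-1})$ (see \eqref{Prin-3}) either as an impulse at $\tau_{i-1}$ or uniformly in time. On a completed slab this creates $D_i=\big(h_n S(h_n)-\int_0^{h_n}S(r)\,\mathrm dr\big)\chi_\omega p_n^*(\tau_{i-1})$, and the identity $h_n S(\sigma)-\int_0^\sigma S(r)\,\mathrm dr=(h_n-\sigma)S(\sigma)+\int_0^\sigma r\,\triangle S(r)\,\mathrm dr$ shows that for $\sigma=h_n$ one has $D_i=\triangle\Psi_i$ with $\Psi_i=\int_0^{h_n}r\,S(r)\chi_\omega p_n^*(\tau_{i-1})\,\mathrm dr$ and $\|\Psi_i\|\leq\tfrac12 h_n^2\|p_n^*(\tau_{i-1})\|$, the current slab carrying an extra $O(h_n)$ remainder $(h_n-\sigma)S(\sigma)\chi_\omega p_n^*(\tau_{i-1})$. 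Transporting the $D_i$ forward and summing, the analytic smoothing bound $\|\triangle S(\sigma)\|_{L^2\to L^2}\leq C\sigma^{-1}$ gives $\|\triangle S(t-\tau_i)\Psi_i\|\leq C\|\Psi_i\|/(t-\tau_i)$; the off-diagonal slabs (where $t-\tau_i\geq h_n$) telescope to $O(h_n|\log h_n|)$, while the near-diagonal slabs and the current-slab remainder contribute $O(h_n)$. Closing the recursion with $(I-S(T))^{-1}$ as above yields $\|z\|_{L^\infty(0,T;L^2(\Omega))}\leq C(T)h_n|\log h_n|\,\|p_n^*\|_{L^\infty(0,T;L^2(\Omega))}$. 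Here $\|p_n^*\|_{L^\infty(0,T;L^2(\Omega))}\leq C(T)\|y_d\|_{L^2(0,T;L^2(\Omega))}$ uniformly in $n$: from \eqref{Prin-2} and the periodicity argument $\|p_n^*\|_{L^\infty(0,T;L^2)}\leq C(T)\|y_n^*-y_d\|_{L^2(0,T;L^2)}$, and testing optimality of $(Y_n^*,U_n^*)$ against the zero control (whose only periodic state is $0$) gives $\|y_n^*-y_d\|_{L^2(0,T;L^2)}\leq\|y_d\|_{L^2(0,T;L^2)}$.

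Combining the two pieces, $\|y^*-y_n^*\|_{L^\infty(0,T;L^2(\Omega))}\leq C(T)(h_n^{1/2}+h_n|\log h_n|)\|y_d\|_{L^2(0,T;L^2(\Omega))}\leq C(T)h_n^{1/2}\|y_d\|_{L^2(0,T;L^2(\Omega))}$, which by the comparisons in the first paragraph implies all the stated estimates in both cases $\omega=\Omega$ and $\omega\neq\Omega$. Within this approach the main obstacle is the slab-by-slab analysis of $z$: extracting the Laplacian structure $D_i=\triangle\Psi_i$, controlling the logarithmic near-diagonal sum, and securing the $n$-uniform bound on $\|p_n^*\|_{L^\infty(0,T;L^2(\Omega))}$; the $w$-piece is essentially automatic. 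I suspect the dimension-dependent exponents in the statement instead arise from the more robust route of interpolating \eqref{Lp-estimate:1} toward $p=\infty$, where the genuine difficulty is that $y_n^*$ is only of bounded variation in time, with jumps of size $O(h_n)$ across the mesh (by \eqref{Prin-3}), so it just fails to lie in a time-Sobolev space embedding into $C([0,T];L^2(\Omega))$; compensating this by the limited spatial regularity of $\chi_\omega p_n^*$, which lies only in $H^s(\Omega)$ for $s<1/2$ across $\p\omega$ when $\omega\neq\Omega$, forces a Sobolev trade-off whose exponent degrades with $N$ — exactly the trade-off that the comparison argument above avoids.
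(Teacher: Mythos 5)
Your proposal is correct, and it takes a genuinely different --- and in fact stronger --- route than the paper's. The paper uses the same splitting $y^*-y_n^*=(y^*-y(u_n^*))+(y(u_n^*)-y_n^*)$ and treats the first piece exactly as you do (see \eqref{infty-estimate:3}--\eqref{infty-estimate:4}), but for the second piece it discards the outer semigroup by contractivity, reducing matters to $\sum_{j=2}^n\int_{\tau_{j-1}}^{\tau_j}\|(I-e^{(s-\tau_{j-1})\triangle})\chi_\omega p_n^*(\tau_{j-1})\|\,\mathrm ds$, i.e.\ to the modulus of continuity of the semigroup at the rough data $\chi_\omega p_n^*(\tau_{j-1})\notin H_0^1(\Omega)$; that is precisely where the $C^2$ hypothesis on $\partial\omega$, the measure-theoretic and mollification lemmas (Lemmas \ref{Measure} and \ref{moll-4}), the Sobolev trade-offs of Lemma \ref{intuitive}, and hence the dimension-dependent exponents all enter. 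You instead keep the outer semigroup and exploit the cancellation of control mass on each completed slab: writing $D_j=\triangle\Psi_j$ with $\|\Psi_j\|\leq\tfrac12 h_n^2\|p_n^*(\tau_{j-1})\|$, the smoothing bound $\|\triangle e^{\sigma\triangle}\|_{\mathcal L(L^2(\Omega))}\leq(e\sigma)^{-1}$ turns the off-diagonal slab contributions into a harmonic (not, as you say, ``telescoping'') sum of size $O(h_n\log n)$, while the single near-diagonal slab, the current slab, and the periodicity correction through $(I-e^{T\triangle})^{-1}$ contribute $O(h_n)$ and $O(h_n\log n)$ respectively. I checked the delicate steps --- the identity $h_nS(h_n)w-\int_0^{h_n}S(r)w\,\mathrm dr=\int_0^{h_n}r\,\triangle S(r)w\,\mathrm dr$ (Fubini plus closedness of $\triangle$ to pull it out of the Bochner integral and commute it with $S(t-\tau_j)$), the bound $\|D_j\|\leq h_n e^{-1}\|p_n^*(\tau_{j-1})\|$ for the near-diagonal slab, the harmonic sum, the closure by periodicity, and the $n$-uniform bound $\|p_n^*\|_{C([0,T];L^2(\Omega))}\leq C(T)\|y_d\|_{L^2(0,T;L^2(\Omega))}$, which is the paper's own estimate \eqref{infty-estimate:10} weakened to $L^2$ --- and found no gap, so the conclusion $\|y^*-y_n^*\|_{L^\infty(0,T;L^2(\Omega))}\leq C(T)(h_n^{1/2}+h_n|\log h_n|)\|y_d\|_{L^2(0,T;L^2(\Omega))}\leq C(T)h_n^{1/2}\|y_d\|_{L^2(0,T;L^2(\Omega))}$ follows, with the bottleneck being the control estimate \eqref{error_estim_control} rather than any regularity of $\chi_\omega p_n^*$. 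What your approach buys is substantial: the rate is dimension-free, no boundary regularity of $\omega$ is used, and the three auxiliary lemmas of Section \ref{sec_proof_infty-estimate} become unnecessary. Be aware of what this means: you have proved exactly the conjecture the authors state in the remark following Theorem \ref{infty-estimate} (that the order $1/2$ persists when $\omega\subsetneq\Omega$), which they explicitly left open; so before relying on it you should write the slab-by-slab computation out in complete detail, but the mechanism --- quadrature-type error for parabolic evolution with nonsmooth data absorbed by analytic smoothing --- is classical in the numerical analysis of parabolic equations, and I see no obstruction to it here.
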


Theorem \ref{infty-estimate} is proved in Section \ref{sec_proof_infty-estimate}.

\begin{remark}\label{rem2}
The above error estimates are much easier to obtain when the control domain $\omega$ is equal to the whole domain $\Omega$, that is, when $\omega=\Omega$. But in this case the optimal control problems $\OCP$ and $\IOCPn$ have little interest.
Actually, the main difficulty in obtaining our results is due to the fact that, if $\omega\subsetneq\Omega$, then the function $\chi_\omega$ is not smooth and the function  $\chi_\omega p_n^*(\tau_{i-1})$ in \eqref{Prin-3}  is not in $H^1_0(\Omega)$. In the proofs of Theorems \ref{Theorem-Error} and \ref{infty-estimate} (see Section \ref{secproofs}), in addition to more or less standard functional analysis arguments, to overcome the abovementioned difficulty, we use smooth regularizations of the characteristic function $\chi_\omega$, the gradient of which we have to estimate in a refined way in some appropriate $L^p$ norm. Of course, this gradient blows up as the regularization parameter tends to zero, but fortunately there is some room to design appropriate regularizations, with adequate blow-up exponents (which we compute in a sharp way) that can be compensated elsewhere in the estimates, using Sobolev embeddings and usual functional inequalities. Using this approach, and deriving nonstandard estimates for the linear heat equation, we ultimately establish the desired error estimates.
\end{remark}

\begin{remark}
In Theorem \ref{infty-estimate}, if $\omega=\Omega$ (trivial case, according to Remark \ref{rem2}) then the order of convergence of the state is $1/2$, and we conjecture that it is sharp.\footnote{Actually, we are able to prove that the exponent $1/2$ is sharp in the estimate given in Lemma \ref{compare:3} (in Section \ref{sec_proof_Theorem-Error}) and in Lemma \ref{intuitive} (in Section \ref{sec_usefulestimate}), in the case where $\omega=\Omega$ and $p=2$. We do not provide the proofs of these facts here.}
If $\omega\subsetneq\Omega$, then we have obtained the error estimate \eqref{infty-estimate:1} but we conjecture that it is not sharp, and that the order of convergence $1/2$ should hold true as well.
\end{remark}

\paragraph{Error estimates for the sampled-data approximation.}

\begin{theorem}\label{thm_SD}
There exists $C(T)>0$ such that
\begin{equation}\label{errorSDcontrol}
\|u^*-f_n^*\|_{L^2(0,T;L^2(\Omega))} = \left(\sum_{i=1}^n \int_{\tau_{i-1}}^{\tau_i}\|u^*-v_{i,n}^*\|^2\,\mathrm dt\right)^{1/2}\leq C(T) h_n  \|y_d\|_{L^2(0,T;L^2(\Omega))} ,
\end{equation}
\begin{equation}\label{errorSDstate}
\|y^*-\bar{y}_n^*\|_{C([0,T];H_0^1(\Omega))}+
\|y^*-\bar{y}_n^*\|_{L^2(0,T;H^2(\Omega)\cap H_0^1(\Omega))\cap H^1(0,T;L^2(\Omega))}\leq C(T) h_n  \|y_d\|_{L^2(0,T;L^2(\Omega))} ,
\end{equation}
and
\begin{equation}\label{wang-2}
\vert J(y^*,u^*)-J(\bar{y}_n^*,f_n^*)\vert\leq C(T)h_n \|y_d\|_{L^2(0,T;L^2(\Omega))}^2 .
\end{equation}
Moreover, the constant $C(T)$ is independent of $n$ and of $y_d$.
\end{theorem}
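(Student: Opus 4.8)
The plan is to set up a single coupled system for the differences of the two optimality systems \eqref{Maxi-1}--\eqref{Maxi-3} and \eqref{PMP_SD1}--\eqref{PMP_SD3}, and to extract coercivity in both the state and the control differences from a duality (energy) identity. Write $z_n=y^*-\bar y_n^*$ and $q_n=p^*-\bar p_n^*$. Subtracting \eqref{PMP_SD1} from \eqref{Maxi-1} and \eqref{PMP_SD2} from \eqref{Maxi-2}, these differences solve the periodic homogeneous Dirichlet problems
\begin{equation*}
\p_t z_n-\triangle z_n=\chi_\omega(u^*-f_n^*),\qquad \p_t q_n+\triangle q_n=z_n,\qquad z_n(0)=z_n(T),\quad q_n(0)=q_n(T).
\end{equation*}

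First I would test the $z_n$-equation against $q_n$ and the $q_n$-equation against $z_n$, integrate over $\Omega\times(0,T)$, and add the two identities. The time-derivative terms combine into $[\langle z_n,q_n\rangle]_0^T$, which vanishes by periodicity, and the Laplacian terms cancel by Green's formula (both functions vanish on $\p\Omega$). This produces $\|z_n\|_{L^2(0,T;L^2(\Omega))}^2=-\int_0^T\langle u^*-f_n^*,\chi_\omega q_n\rangle\,\mathrm dt$. The crucial algebraic step is to relate $\chi_\omega q_n$ to the control difference: denoting by $\Pi_n$ the piecewise time-averaging operator on $\{\tau_i\}$, one has $u^*=\chi_\omega p^*$ and $f_n^*=\chi_\omega\Pi_n\bar p_n^*$ by \eqref{Maxi-3} and \eqref{PMP_SD3}--\eqref{def_fn*}, whence $\chi_\omega q_n=(u^*-f_n^*)-\chi_\omega(\bar p_n^*-\Pi_n\bar p_n^*)$. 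Substituting, both squares appear with the favorable sign:
\begin{equation*}
\|z_n\|_{L^2(0,T;L^2(\Omega))}^2+\|u^*-f_n^*\|_{L^2(0,T;L^2(\Omega))}^2=\int_0^T\langle u^*-f_n^*,\chi_\omega(\bar p_n^*-\Pi_n\bar p_n^*)\rangle\,\mathrm dt.
\end{equation*}
I would then bound the right-hand side by Cauchy--Schwarz together with the first-order averaging (Poincar\'e--Wirtinger) estimate $\|\bar p_n^*-\Pi_n\bar p_n^*\|_{L^2(0,T;L^2(\Omega))}\leq C h_n\|\p_t\bar p_n^*\|_{L^2(0,T;L^2(\Omega))}$, valid since $\bar p_n^*\in H^1(0,T;L^2(\Omega))$, and absorb the control norm by Young's inequality to get $\|z_n\|^2+\tfrac12\|u^*-f_n^*\|^2\leq Ch_n^2\|\p_t\bar p_n^*\|_{L^2(0,T;L^2(\Omega))}^2$. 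It remains to bound $\|\p_t\bar p_n^*\|$ uniformly in $n$: comparing $(\bar y_n^*,f_n^*)$ with the admissible pair $(0,0)$ (zero control yields the zero periodic state) gives $J(\bar y_n^*,f_n^*)\leq J(0,0)=\tfrac12\|y_d\|^2$, hence $\|\bar y_n^*-y_d\|_{L^2(0,T;L^2(\Omega))}\leq\|y_d\|$, and maximal parabolic regularity for the periodic adjoint \eqref{PMP_SD2} yields $\|\p_t\bar p_n^*\|\leq C\|\bar y_n^*-y_d\|\leq C\|y_d\|$ with $C$ independent of $n$. This establishes \eqref{errorSDcontrol}.

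For the state estimate \eqref{errorSDstate} I would apply maximal parabolic regularity to the periodic problem satisfied by $z_n$, whose source $\chi_\omega(u^*-f_n^*)$ is already controlled in $L^2(0,T;L^2(\Omega))$ by $Ch_n\|y_d\|$; this gives the $L^2(0,T;H^2(\Omega)\cap H_0^1(\Omega))\cap H^1(0,T;L^2(\Omega))$ bound, and the $C([0,T];H_0^1(\Omega))$ bound follows from the continuous embedding $L^2(0,T;H^2\cap H_0^1)\cap H^1(0,T;L^2)\hookrightarrow C([0,T];H_0^1)$. Finally, \eqref{wang-2} follows by writing the difference of the two quadratic functionals in time-integrated form as $\tfrac12\langle (y^*-y_d)+(\bar y_n^*-y_d),z_n\rangle+\tfrac12\langle u^*+f_n^*,u^*-f_n^*\rangle$, bounding the ``sum'' factors by $C\|y_d\|$ (again via $J(\cdot)\leq J(0,0)$, which controls $\|y^*\|,\|\bar y_n^*\|,\|u^*\|,\|f_n^*\|$) and the ``difference'' factors by $Ch_n\|y_d\|$.

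The part requiring the most care is obtaining the duality identity with exactly the right signs, so that $\|z_n\|^2$ and $\|u^*-f_n^*\|^2$ appear together on the left, and guaranteeing that the constant in the a priori bound $\|\p_t\bar p_n^*\|\leq C\|y_d\|$ is genuinely uniform in $n$. Unlike the impulse case (cf.\ Remark \ref{rem2}), no regularization of $\chi_\omega$ is needed here, since $\chi_\omega$ intervenes only as a bounded $L^\infty$ multiplier and all control estimates are carried out in $L^2$; the improvement from the impulse rate $h_n^{1/2}$ to the sampled-data rate $h_n$ comes precisely from the first-order averaging estimate on $\bar p_n^*-\Pi_n\bar p_n^*$, which is the natural counterpart of the exact characterization \eqref{PMP_SD3} of the optimal sampled control as a time-average of the adjoint state.
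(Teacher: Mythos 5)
Your proposal is correct and takes essentially the same route as the paper: your duality/energy identity for $(z_n,q_n)$ is exactly the paper's Step~2 (which shows that its term $I_1=\int_0^T\langle \chi_\omega(p^*-\bar p_n^*),u^*-f_n^*\rangle\,\mathrm dt$ equals $-\|y^*-\bar y_n^*\|_{L^2(0,T;L^2(\Omega))}^2\leq 0$), your Poincar\'e--Wirtinger averaging bound on $\bar p_n^*-\Pi_n\bar p_n^*$ is the paper's Step~3 estimate of $I_2$, and the uniform bound on $\partial_t\bar p_n^*$ via $J(\bar y_n^*,f_n^*)\leq J(0,0)$ together with Lemma~\ref{Energy}, as well as the derivation of \eqref{errorSDstate} and \eqref{wang-2}, coincide with the paper's. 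The only cosmetic difference is that you fold the paper's decomposition $\|u^*-f_n^*\|_{L^2(0,T;L^2(\Omega))}^2=I_1+I_2$ into a single exact identity with both squared norms on the left and then absorb by Young's inequality, whereas the paper bounds $I_1\leq 0$ and $|I_2|\leq C(T)h_n\|y_d\|_{L^2(0,T;L^2(\Omega))}\|u^*-f_n^*\|_{L^2(0,T;L^2(\Omega))}$ and divides.
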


Theorem \ref{thm_SD} is proved in Section \ref{sec_proof_thm_SD}.

\begin{remark}
It is interesting to note that the error estimates are better for the sampled-data approximation than for the impulse approximation. For instance, the control error estimate is of order $1$ for the sampled-data approximation, but is of order $1/2$ for the impulse approximation (in $L^2$ norm). This is not surprising because, as explained in Remark \ref{rem_sampled}, the sampled-data optimal control problem $\SOCPn$ can easily be recast as a classical approximation of $\OCP$, in the sense that the class of admissible sampled controls is a subset of the class of admissible controls of $\OCP$. In this sense, obtaining the error estimates of Theorem \ref{thm_SD} could be expected. In contrast, the set of unknowns $(Y_n,U_n)$ for the impulse optimal control problem $\IOCPn$ is not a subset of the set of unknowns $(y,u)$ for $\OCP$. This explains why the derivation of error estimates for $\IOCPn$ is much more difficult.
\end{remark}

\subsection{Further comments}
We have established error estimates for the optimal controls and states of impulse approximations and of sampled-data approximations of a linear quadratic optimal control problem for a linear heat equation, with internal control, and with periodic terminal constraints.
To our knowledge, this is the first result providing such convergence results and estimates, in an infinite-dimensional context.
Many questions are open, that are in order.

\paragraph{Terminal constraints.}
Here, we have considered periodic terminal constraints. This condition is instrumental in order to obtain existence and uniqueness results and to be able to derive a PMP (see, in particular, Lemma \ref{Energy} in Section \ref{sec21}). But it is of course of interest to consider other terminal conditions.
For instance, one may want to consider the problem $\OCP$ with the fixed terminal conditions $y(0)=y^0\in L^2(\Omega)$ and $y(T)=0$. It is well known that this exact null controllability problem admits some solutions, without any specific requirement on the (open) domain of control $\omega$. But it is well known too that  the final adjoint state coming from the PMP lives in a very big space that is larger than any distribution space. This raises an important difficulty from the functional analysis point of view, preventing us from extending our analysis to this setting.

Moreover, when considering more general equations (see the next item), if one considers an infinite-codimensional state constraint then it is well known that the PMP may fail (see \cite{LiYong}), and then in this case even the basic fact of establishing an optimality system may raise some impassable obstacles.

\paragraph{More general evolution equations.}
We have considered the linear homogeneous heat equations. The question is open to extend our analysis to more general parabolic equations, of the kind $\partial_ty=Ay+\chi_\omega u$, with $A:D(A)\rightarrow L^2(\Omega)$ generating an analytic semi-group. For instance, one may want to replace the Dirichlet Laplacian with a general elliptic second-order differential operator, with various possible boundary conditions (Dirichlet, Neumann, Robin, mixed), or with the Stokes operator. It is likely that our results may be extended to this situation, but note anyway that, in our proofs, we repeatedly use the fact that we have Dirichlet conditions.

More generally, the question is open to investigate semilinear parabolic equations, of the kind $\partial_ty=Ay+f(y)+\chi_\omega u$. Even when $A$ is the Dirichlet Laplacian, this extension seems to be challenging.

The case of hyperbolic equations is another completely open issue. Certainly, the first case to be investigated is the wave equation: in that case one replaces the first equation in \eqref{eq_pb} with the internally controlled linear homogeneous wave equation $\partial_{tt}y=\triangle y+\chi_\omega u$. In this case, it is well known that exact controllability holds true under the so-called Geometric Control Condition on $(\omega,T)$. What happens for the corresponding impulse model is open, and is far from being clear (see \cite{Khapalov} for results in that direction). Also, many estimates, that are quite standard for heat-like equations and that we use in this paper, are not valid anymore in the hyperbolic context.

\paragraph{More general control operators.}
In this paper, we have consider an internal control. Writing the control system in the abstract form $\partial_ty=Ay+Bu$, this corresponds to considering the control operator $B$ defined by $Bu=\chi_\omega u$. In this case, the control operator is bounded, and we implicitly use this fact in many places in our proofs. We expect that our results can be extended to more general classes of bounded control operators, but the case of unbounded control operators seems much more challenging. For instance, what happens when considering a Dirichlet boundary control is open.

\paragraph{Time-varying control domains and optimal design.}
Another open question is to derive our error estimates for time-varying control domains. Note that control issues for wave equations with time-varying domains have been investigated in \cite{Lebeau}. In our context, this means that we consider a control domain $\omega(t)$ depending on $t$ in \eqref{eq_pb}. In this case, the definition of the approximating impulse control system \eqref{impulse} must be adapted as well, by considering $\omega(\tau_{i-1})$ at time $\tau_{i-1}$. It is likely that our main results may be at least extended to the case where $\omega(t)$ depends continuously on $t$. The general case is open.

Related to this issue, is the question of determining how to place and shape ``optimally" the control domain. Of course, the optimization criterion has to be defined, and we refer to \cite{PTZObs1, PTZ_HUM, PTZ_optparab} where optimal design problems have been modeled and studied. In the context of the present paper, we could investigate the problem of designing the best possible control domain such that the constants appearing in our error estimates be minimal.

Let us be more precise and let us define the open problem.
Given any $T>0$, and any open subset $\omega$ of $\Omega$, Theorem \ref{Theorem-Error} asserts that there exists a constant $C(T)>0$ such that the error estimates \eqref{error-2}, \eqref{error_estim_control} and \eqref{Lp-estimate:1} (with $p=2$, for instance) are satisfied. Since this constant depends on $\omega$, we rather denote it by $C_T(\omega)$.
Given a real number $L\in(0,1)$, we consider the optimal design problem
$$
\inf_{\omega\in\mathcal{U}_L} C_T(\omega),
$$
that is, the problem of finding, if it exists, the best possible control subset having a prescribed Lebesgue measure, such that the functional constant in the error estimates is as small as possible. This prescribed measure is $L\vert\Omega\vert$, that is, a fixed fraction of the total volume of the domain.
We stress that the set of unknowns is the (very big) set of all possible measurable subsets of $\Omega$ of measure $L\vert\Omega\vert$. It does not share any good compactness properties that would be appropriate for deriving nice functional properties, and thus already the problem of the existence of an optimal set is far from obvious. However, following \cite{PTZ_optparab} where similar optimal design problems have been investigated in the parabolic setting, we conjecture that there exists a unique best control domain, in the sense given above. Proving this conjecture, and deriving characterizations of the optimal set, is an interesting open issue.

Note that, less ambitiously than the problem above, one could already consider simpler optimal design problems, where the problem consists, for instance, of optimizing the placement of a control domain having a prescribed shape, such as a ball: in this case the set of unknowns is finite-dimensional (centers of the balls).

\paragraph{Impulse Riccati theory.}
In the present paper, we have considered a problem within a finite horizon of time $T$. It would be interesting to consider the optimal control problems $\OCP$, $\IOCPn$ and $\SOCPn$ in infinite horizon, that is, when $T=+\infty$. In this case, the optimal control solution of $\OCP$ is obtained by the well known Riccati theory (see, e.g., \cite{Zabczyk}), which gives, here,
$$
u = \chi_\omega E(y-y_d) ,
$$
where $E\in L(L^2(\Omega))$ is the unique negative definite solution of the algebraic Riccati equation
$$
\triangle E+E\triangle+E\chi_\omega E=\mathrm{id} .
$$

For the approximating impulse problem $\IOCPn$, up to our knowledge, the Riccati procedure has not been investigated. In other words, up to now there does not seem to exist a Riccati theory for impulse linear quadratic optimal control problems in infinite dimension. Developing such a theory is already a challenge in itself. Assuming that such a theory has been established, the next challenge would be to establish as well the corresponding error estimates on the control and on the state, as done in our paper.

For the approximating sampled-data problem $\SOCPn$, few results exist in the literature. In \cite{RosenWang} the authors have established a convergence result (which can certainly be improved, by combining it with the more recent results of \cite{LasieckaTriggiani1, LiuZheng} for instance), but we are not aware of any result providing error estimates as in our paper.

For impulse systems in particular, such a theory would certainly be very useful for many practical issues, because, as already mentioned, impulse control may be an interesting alternative to discretization approaches, or to sample-and-hold procedures, which is sometimes better suited to the context of the study.
Notice that, although the theory of space semi-discretization of the Riccati procedure is complete in the parabolic case (but not in the hyperbolic case when the control operators are unbounded), to our knowledge the theory is far from complete for infinite-dimensional sampled-data control systems. Therefore, with respect to sample-and-hold procedures, this is one more motivation for developing an impulse Riccati theory and its approximations.

\section{Proofs}\label{secproofs}

\subsection{Preliminaries}\label{sec21}
\paragraph{Existence and uniqueness.}
We start with an easy existence and uniqueness result, together with a useful estimate.
Throughout the paper, we denote by  $\{e^{t\triangle}\}_{t\geq 0}$  the semi-group generated by the Dirichlet-Laplacian on $L^2(\Omega)$.

\begin{lemma}\label{Energy}
Let $T>0$ be arbitrary.
Let $f\in L^2(0,T;L^2(\Omega))$. Then the equation
\begin{equation}\label{Energy:1}
\left\{
\begin{array}{lll}
\p_t y-\triangle y=f&\mbox{in}&\Omega\times (0,T),\\
y=0&\mbox{on}&\p\Omega\times (0,T),\\
y(0)=y(T)&\mbox{in}&\Omega
\end{array}
\right.
\end{equation}
has a unique solution $y\in H^1(0,T;L^2(\Omega))\cap L^2(0,T;H^2(\Omega)\cap H_0^1(\Omega))$.
Moreover, there exists $C(T)>0$, not depending on $f$ and on $y$, such that
\begin{equation}\label{Energy:2}
\|y\|_{C([0,T];H_0^1(\Omega))}+\|y\|_{H^1(0,T;L^2(\Omega))\cap L^2(0,T;H^2(\Omega)\cap H_0^1(\Omega))}
\leq C(T)\|f\|_{L^2(0,T;L^2(\Omega))}.
\end{equation}
\end{lemma}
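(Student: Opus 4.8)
The plan is to split the statement into existence/uniqueness, obtained from the semigroup $\{e^{t\triangle}\}_{t\geq 0}$, and the quantitative estimate \eqref{Energy:2}, obtained by energy arguments that exploit the periodicity in an essential way.

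First I would reduce the periodic problem to an algebraic equation for the initial datum. By Duhamel's formula, any solution of the first two lines of \eqref{Energy:1} with $y(0)=y_0$ reads $y(t)=e^{t\triangle}y_0+\int_0^t e^{(t-s)\triangle}f(s)\,\mathrm ds$, so the constraint $y(0)=y(T)$ is equivalent to
\[ (I-e^{T\triangle})\,y_0=\int_0^T e^{(T-s)\triangle}f(s)\,\mathrm ds=:z. \]
Because the Dirichlet-Laplacian has eigenvalues $-\lambda_k$ with $\lambda_k\geq\lambda_1>0$ (strict positivity being exactly where the Dirichlet condition, through Poincar\'e's inequality, enters), the operator $e^{T\triangle}$ has spectral radius $e^{-\lambda_1 T}<1$; hence $I-e^{T\triangle}$ is boundedly invertible on $L^2(\Omega)$ with $\|(I-e^{T\triangle})^{-1}\|\leq(1-e^{-\lambda_1 T})^{-1}$. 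This yields a unique admissible datum $y_0=(I-e^{T\triangle})^{-1}z$, hence a unique $y$. Since $z$ is the value at time $T$ of the Duhamel solution issued from $0$, standard maximal parabolic regularity gives $z\in H_0^1(\Omega)$, and $(I-e^{T\triangle})^{-1}$ preserves $H_0^1(\Omega)$ (it commutes with the Laplacian), so $y_0\in H_0^1(\Omega)$; maximal regularity for the Cauchy problem with datum $y_0\in H_0^1(\Omega)$ and source $f\in L^2(0,T;L^2(\Omega))$ then places $y$ in $H^1(0,T;L^2(\Omega))\cap L^2(0,T;H^2(\Omega)\cap H_0^1(\Omega))\cap C([0,T];H_0^1(\Omega))$.

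For the estimate \eqref{Energy:2} I would argue by energy identities (rigorously, on a spectral Galerkin approximation in the eigenbasis of the Dirichlet-Laplacian, which respects the periodicity, then pass to the limit). Pairing the equation with $y$ and integrating over $(0,T)$ gives $\tfrac12(\|y(T)\|^2-\|y(0)\|^2)+\int_0^T\|\nabla y\|^2\,\mathrm dt=\int_0^T\langle f,y\rangle\,\mathrm dt$; the periodicity $y(T)=y(0)$ kills the boundary term, and Poincar\'e's inequality then yields $\|y\|_{L^2(0,T;H_0^1(\Omega))}\leq C(T)\|f\|_{L^2(0,T;L^2(\Omega))}$. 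Next, pairing with $\partial_t y$ gives $\int_0^T\|\partial_t y\|^2\,\mathrm dt+\tfrac12(\|\nabla y(T)\|^2-\|\nabla y(0)\|^2)=\int_0^T\langle f,\partial_t y\rangle\,\mathrm dt$; periodicity in $H_0^1(\Omega)$ again cancels the boundary term, and Cauchy--Schwarz with Young's inequality gives $\|\partial_t y\|_{L^2(0,T;L^2(\Omega))}\leq\|f\|_{L^2(0,T;L^2(\Omega))}$. Reading off $\triangle y=\partial_t y-f$ and invoking elliptic $H^2$-regularity (valid since $\partial\Omega$ is $C^2$) controls $\|y\|_{L^2(0,T;H^2(\Omega))}$, and the $C([0,T];H_0^1(\Omega))$ bound follows by writing $\|\nabla y(t)\|^2=\|\nabla y(t_*)\|^2+\int_{t_*}^t\frac{\mathrm d}{\mathrm ds}\|\nabla y\|^2\,\mathrm ds$ for a mean-value time $t_*$ and bounding the integrand by the quantities already estimated. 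Finally, uniqueness is immediate from \eqref{Energy:2} applied to $f=0$.

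The crux of the argument---and the only genuinely non-standard point relative to the usual inhomogeneous Cauchy problem---is the invertibility of $I-e^{T\triangle}$, i.e.\ the absence of a period-$T$ resonance at the zero frequency. This is precisely where the homogeneous Dirichlet condition is indispensable: it forces $\lambda_1>0$, so that $1-e^{-\lambda_k T}\in[1-e^{-\lambda_1 T},1)$ is bounded away from $0$; for Neumann conditions the constant mode would give $\lambda_1=0$ and $I-e^{T\triangle}$ would fail to be invertible, requiring a compatibility condition on $f$. The remaining technical care lies in justifying the pairing with $\partial_t y$, for which I would carry out the energy identities on Galerkin approximants compatible with the periodicity constraint and then pass to the limit, rather than manipulate $y$ directly.
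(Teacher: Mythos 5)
Your proof is correct, and its first half coincides with the paper's: the same Duhamel reduction to $(I-e^{T\triangle})y_0=\int_0^T e^{(T-s)\triangle}f(s)\,\mathrm ds$, the same inversion of $I-e^{T\triangle}$ via $\lambda_1>0$, and the same first energy estimate (pairing with $y$, periodicity, Poincar\'e). Where you genuinely diverge is the second estimate. The paper multiplies the equation by $-2t\triangle y$: the weight $t$ annihilates the boundary term at $t=0$, so it can bound $\|\nabla y(T)\|$ (hence $\|\nabla y(0)\|$ by periodicity) while knowing only that $y$ is a weak solution, and it then quotes the standard Cauchy-problem maximal-regularity estimate driven by $\nabla y(0)$ and $f$. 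You instead bootstrap regularity through the construction itself ($z\in H_0^1(\Omega)$ by parabolic smoothing, and $(I-e^{T\triangle})^{-1}$ preserves $H_0^1(\Omega)$ as a function of the Dirichlet Laplacian), and only then pair with $\partial_t y$, letting periodicity of $\nabla y$ at the endpoints cancel the boundary term to get $\|\partial_t y\|_{L^2(0,T;L^2(\Omega))}\leq \|f\|_{L^2(0,T;L^2(\Omega))}$, followed by elliptic $H^2$-regularity and the mean-value argument for the $C([0,T];H_0^1(\Omega))$ norm. Both are sound; the paper's weighted multiplier buys applicability at the weak-solution level before any bootstrap, while your route is more self-contained on the estimate side and yields cleaner constants. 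One caution about your final sentence: deducing uniqueness from \eqref{Energy:2} with $f=0$ requires the estimate to hold for an \emph{arbitrary} solution in $H^1(0,T;L^2(\Omega))\cap L^2(0,T;H^2(\Omega)\cap H_0^1(\Omega))$, not merely the constructed one. This does work in your framework, because the spectral projections $P_m$ can be applied to any given solution (they preserve the equation, the regularity class, and the periodicity constraint), but it deserves to be said explicitly; the paper sidesteps the issue by proving uniqueness of weak solutions directly from the energy identity for the difference of two solutions.
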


\begin{proof}
As a preliminary remark, we recall that, given $y_0\in L^2(\Omega)$ and $f\in L^2(0,T;L^2(\Omega))$ arbitrary, there exists a unique weak solution $y(\cdot;y_0,f)\in L^2(0,T;H_0^1(\Omega))\cap H^1(0,T;H^{-1}(\Omega))\subset C([0,T];L^2(\Omega))$ of $\partial_t y - \triangle y=f $ in $\Omega\times (0,T)$, with $y=0$ along $\partial\Omega\times (0,T)$, such that $y(0)=y_0$ (see \cite{LM} for instance). Here, ``weak" means that the differential equation is written in $H^{-1}(\Omega)$. Moreover, if $y_0\in H_0^1(\Omega)$, then $y(\cdot;y_0,f)\in L^2(0,T;H^2(\Omega)\cap H_0^1(\Omega))
\cap H^1(0,T;L^2(\Omega))\subset C([0,T];H_0^1(\Omega))$.

Given any $f\in L^2(0,T;L^2(\Omega))$, let us prove the existence and uniqueness of a weak solution of \eqref{Energy:1}.
Since $\|e^{T \triangle}\|_{\mathcal L(L^2(\Omega),L^2(\Omega))}\leq e^{-\lambda_1 T}<1$,
it follows that $(I-e^{T\triangle})^{-1}$  exists and $\|(I-e^{T\triangle})^{-1}\|_{\mathcal L(L^2(\Omega),L^2(\Omega))}\leq (1-e^{-\lambda_1 T})^{-1}$,
where $-\lambda_1<0$ is the first eigenvalue of the Dirichlet Laplacian.
Now  we define
\begin{eqnarray}\label{p-2}
 y_0^f= (I-e^{T\triangle})^{-1} \int_0^T e^{ (T-t)\triangle} f(t) \,\mathrm dt ,
\end{eqnarray}
and
\begin{eqnarray}\label{p-3}
 y(t;y_0^f,f)=e^{t\triangle} y_0^f + \int_0^t e^{(t-s)\triangle} f(s)\, \mathrm ds,\quad t\in[0,T].
\end{eqnarray}
Then $y_0^f\in L^2(\Omega)$ and
$y(\cdot;y_0^f,f)\in L^2(0,T;H_0^1(\Omega))\cap H^1(0,T;H^{-1}(\Omega))$ is the weak solution of $\partial_t y - \triangle y=f$ in $\Omega\times (0,T)$, with $y=0$ along $\partial\Omega\times (0,T)$, such that $y(0)=y_0^f$.
Using \eqref{p-2} and \eqref{p-3}, we have
\begin{multline*}
 y(T;y_0^f,f) =e^{T\triangle} y_0^f + \int_0^T e^{(T-t)\triangle} f(t)\, \mathrm dt
 = e^{T\triangle} (I-e^{T\triangle})^{-1} \int_0^T e^{(T-t)\triangle} f(t) \,\mathrm dt  + \int_0^T e^{ (T-t)\triangle} f(t)\, \mathrm dt \\
 =(I-e^{T\triangle})^{-1} \int_0^T e^{(T-t)\triangle} f(t) \,\mathrm dt =y(0;y_0^f,f),
\end{multline*}
which gives the periodicity requirement. Hence $y(\cdot;y_0^f,f)$ is a weak solution of \eqref{Energy:1}.

Now, if $y_1$ and $y_2$ are two weak solutions of \eqref{Energy:1} associated with $f$, then
\begin{eqnarray*}
\frac{1}{2} \frac{\mathrm d}{\mathrm dt} \|y_1(t)-y_2(t)\|^2_{L^2(\Omega)}
 + \int_\Omega |\nabla y_1(t)-\nabla y_2(t) |^2 \mathrm dx =0,\;\;\mbox{a.e.}\;\; t\in(0,T).
\end{eqnarray*}
Integrating the latter equality over $(0,T)$, we deduce from the periodicity condition that $y_1=y_2$.
Therefore the weak solution is unique.

It remains to prove that the weak solution $y$ of \eqref{Energy:1} actually belongs to
$L^2(0,T;H^2(\Omega)\cap H_0^1(\Omega))\cap H^1(0,T;L^2(\Omega))$ and to prove the estimate \eqref{Energy:2}.
Using the preliminary remark, we have $y(T)\in H_0^1(\Omega)$, and since $y(0)=y(T)$, it follows that $y(0)\in H_0^1(\Omega)$. Therefore
$y\in L^2(0,T;H^2(\Omega)\cap H_0^1(\Omega))\cap H^1(0,T;L^2(\Omega))$.
Now, multiplying the differential equation by $2y$ and integrating over $\Omega$, we get that
\begin{eqnarray}\label{add-1}
\frac{\mathrm d}{\mathrm dt} \|y\|^2
 + 2\|\nabla y\|^2=2\langle f,y\rangle,\;\;\mbox{a.e.}\;\; t\in (0,T).
\end{eqnarray}
Using the Poincar\'{e} inequality  $\|\varphi\|\leq C\|\nabla\varphi\|$,  valid for any $\varphi\in H_0^1(\Omega)$, combined with \eqref{add-1} and the Young inequality, we infer that
   \begin{eqnarray*}
          \frac{\mathrm d}{\mathrm dt} \|y\|^2
 + 2\|\nabla y\|^2\leq \|\nabla y\|^2+C\|f\|^2, \;\;\mbox{a.e.}\;\; t\in (0,T).
        \end{eqnarray*}
Here and throughout, $C$ designates a generic positive constant only depending on $\Omega$.
Integrating over $(0,T)$, we obtain that
\begin{equation}\label{Energy:3}
\int_0^T \|\nabla y\|^2\,\mathrm dt\leq C\int_0^T \|f\|^2\,\mathrm dt.
\end{equation}
Besides, multiplying the first equation of \eqref{Energy:1} by $-2t\triangle y$ and integrating
over $\Omega$, we have
\begin{equation*}
t\f{\mathrm d}{\mathrm dt}\|\nabla y\|^2+2t\|\triangle y\|^2=-2t\langle f,\triangle y\rangle\leq t\|\triangle y\|^2+t\|f\|^2.
\end{equation*}
Integrating again over $(0,T)$, we obtain that
  \begin{equation*}
       T\|\nabla y(T)\|^2\leq \int_0^T \|\nabla y\|^2\,\mathrm dt+  T \int_0^T \|f\|^2\,\mathrm dt,
       \end{equation*}
which, combined with (\ref{Energy:3}) and the third equation of (\ref{Energy:1}), gives
$\|\nabla y(0)\|^2\leq  \frac{C+T}{T} \int_0^T \|f\|^2\,\mathrm dt$.
This, together with the first and the second equations of (\ref{Energy:1}), implies that
\begin{multline*}
\|y\|_{C([0,T];H_0^1(\Omega))}+\|y\|_{H^1(0,T;L^2(\Omega))\cap L^2(0,T;H^2(\Omega)\cap H_0^1(\Omega))}\\
   \leq C(\|\nabla y(0)\|+\|f\|_{L^2(0,T;L^2(\Omega))})\leq  \frac{C(T+1)}{T} \|f\|_{L^2(0,T;L^2(\Omega))}.
\end{multline*}
This completes the proof.
\end{proof}

\paragraph{Optimality system (PMP).}
The proof of existence and uniqueness of an optimal solution of $\OCP$ is easy. Since it is similar, but simpler, than the proof of Proposition \ref{Principle} further, we skip it.

Let $(y^*,u^*)$ be the optimal solution of $\OCP$. For any $v\in L^2(0,T;L^2(\Omega))$ and $\lambda\in\mathbb{R}\setminus\{0\}$, let $y_{\lambda,v}$ be the solution of
\begin{eqnarray*}
\left\{\begin{array}{lll}
\partial_t y_{\lambda,v}-\triangle y_{\lambda,v}=\chi_\omega (u^*+\lambda v)&\mbox{in}&\Omega\times
(0,T),\\
y_{\lambda,v}=0&\mbox{on}&\partial\Omega\times (0,T),\\
y_{\lambda,v}(0)=y_{\lambda,v}(T)&\mbox{in}&\Omega.
\end{array}\right.
\end{eqnarray*}
Setting $z=\dfrac{y_{\lambda,v}-y^*}{\lambda}$, we have
\begin{eqnarray*}
\left\{\begin{array}{lll}
\partial_t z-\triangle z=\chi_\omega v&\mbox{in}&\Omega\times
(0,T),\\
z=0&\mbox{on}&\partial\Omega\times (0,T),\\
z(0)=z(T)&\mbox{in}&\Omega.
\end{array}\right.
\end{eqnarray*}
Moreover, by definition, $J(y_{\lambda,v},u^*+\lambda v)\geq J(y^*,u^*)$, for every $\lambda\neq 0$, and hence
\begin{eqnarray}\label{e-l-11}
 \int_0^T \int_\Omega (y^*-y_d) z \,\mathrm dx\,\mathrm dt+\int_0^T \int_\Omega u^*v \,\mathrm dx\,\mathrm dt=0.
\end{eqnarray}
Let $p$ be the solution of
\begin{eqnarray*}
\left\{\begin{array}{lll}
\partial_t p+\triangle p=y^*-y_d&\mbox{in}&\Omega\times
(0,T),\\
p=0&\mbox{on}&\partial\Omega\times (0,T),\\
p(0)=p(T)&\mbox{in}&\Omega.
\end{array}\right.
\end{eqnarray*}
This, together with \eqref{e-l-11}, yields that
\begin{eqnarray*}
0=\int_0^T \int_\Omega (y^*-y_d) z \mathrm dx\mathrm dt
+\int_0^T \int_\Omega u^*v \mathrm dx\mathrm dt=\int_0^T \int_\Omega (-\chi_\omega p+u^*)v\mathrm dx\mathrm dt.
\end{eqnarray*}
Hence $ u^*=\chi_\omega p$.
This gives the PMP for the problem $\OCP$.

\subsection{Proof of Proposition \ref{Principle}}\label{sec_proofPrinciple}
Let us first prove the existence and uniqueness of a solution of $\IOCPn$.
According to the beginning of the proof of Lemma \ref{Energy}, we first recall that for $U_n=(u_{1,n},\dots, u_{n-1,n})\in (L^2(\Omega))^{n-1}$, we say that $Y_n=(y_{1,n},\dots, y_{n,n})\in X$ is a weak solution of \eqref{impulse} if
\begin{eqnarray*}
 \langle \partial_t y_{i,n}(t),\varphi \rangle_{H^{-1}(\Omega),H_0^1(\Omega)}
 + \int_\Omega \nabla y_{i,n}(t) \cdot \nabla\varphi \,\mathrm dx
 =0\,\mbox{ a.e. } t\in (\tau_{i-1},\tau_i),\;1\leq i\leq n,
\end{eqnarray*}
for each $\varphi\in H_0^1(\Omega)$ (this means that the differential equation is written in $H^{-1}(\Omega)$) and $y_{i,n}(\tau_{i-1})=y_{i-1,n}(\tau_{i-1})+\chi_\omega u_{i-1,n}$, for $2\leq i\leq n$, and $y_{1,n}(0)=y_{1,n}(T)$.
It is then a standard fact that \eqref{impulse} has a unique weak solution.

Let $d^*=\inf J_n(Y_n,U_n)\geq 0$, where the infimum is taken over all pairs $(Y_n,U_n)\in X\times (L^2(\Omega))^{n-1}$ satisfying \eqref{impulse}. By definition, there exists a sequence $(Y_{n,m},U_{n,m})_{m\geq 1}$, with
$Y_{n,m}=(y_{i,n,m})_{1\leq i\leq n}$ and $U_{n,m}=(u_{i,n,m})_{1\leq i\leq n-1}$ satisfying \eqref{impulse}, such that
\begin{equation}\label{Unique-1}
d^*\leq J_n(Y_{n,m},U_{n,m}) \leq d^*+\f{1}{m}.
\end{equation}
Integrating the equations given by \eqref{impulse}, we get
\begin{equation}\label{Unique-3}
\left\{
\begin{array}{l}
y_{1,n,m}(t)=e^{t\triangle} y_{1,n,m}(0),\quad t\in [0,\tau_1],\\
y_{i,n,m}(t)=e^{t\triangle} y_{1,n,m}(0)+\d{\sum_{j=2}^i} e^{(t-\tau_{j-1})\triangle}\chi_\omega u_{j-1,n,m},
\quad t\in [\tau_{i-1},\tau_i],\quad 2\leq i\leq n,\\
y_{1,n,m}(0)=e^{T\triangle} y_{1,n,m}(0)+\d{\sum_{j=2}^n} e^{(T-\tau_{j-1})\triangle}\chi_\omega u_{j-1,n,m}.
\end{array}\right.
\end{equation}
Using \eqref{Unique-1} and the third equality of \eqref{Unique-3}, we infer that
\begin{equation}\label{Unique-4}
\|y_{1,n,m}(0)\|\leq C,
\end{equation}
for every $m\geq 1$.%
\footnote{Here, the $L^2$ norm is used. For $y_{1,n,m}(0)$, we may wish to consider the $H_0^1(\Omega)$ norm. But since $y_{i,n,m}(\tau_{i-1})=y_{i-1,n,m}(\tau_{i-1})+\chi_\omega u_{i-1,n}$ and $u_{i-1,n}\in L^2(\Omega)$ ($2\leq i\leq n$), it follows that $y_{i,n,m}(\tau_{i-1})\in L^2(\Omega)$, for $2\leq i\leq n$. Hence the $H_0^1(\Omega)$ norm does not seem to be useful.}
Here and throughout the proof, $C$ designates a generic positive constant not depending of $m$. Multiplying the first equation of \eqref{impulse} (written for $y_{i,n,m}$) by $2y_{i,n,m}$ and integrating over $\Omega\times (\tau_{i-1},t)$, we obtain that
\begin{equation*}
\|y_{i,n,m}(t)\|^2+2\int_{\tau_{i-1}}^t \|\nabla y_{i,n,m}(s)\|^2\,\mathrm ds\leq \|y_{i,n,m}(\tau_{i-1})\|^2,
\quad \forall t\in [\tau_{i-1},\tau_i],\quad 1\leq i\leq n,
\end{equation*}
which implies that
\begin{equation*}
\|y_{i,n,m}\|_{C([\tau_{i-1},\tau_i];L^2(\Omega))}+\|y_{i,n,m}\|_{X_i}\leq C\|y_{i,n,m}(\tau_{i-1})\|,\quad 1\leq i\leq n.
\end{equation*}
This, together with (\ref{Unique-1}), (\ref{Unique-4}) and the third equation of (\ref{impulse}), gives
\begin{equation*}
\sum_{i=1}^n (\|y_{i,n,m}\|_{C([\tau_{i-1},\tau_i];L^2(\Omega))}+\|y_{i,n,m}\|_{X_i})
+\sum_{i=2}^n \|u_{i-1,n,m}\|\leq C.
\end{equation*}
Hence, up to some subsequence, we have
\begin{equation*}
y_{i,n,m}\ra y_{i,n}^*\;\;\mbox{weakly in}\;\;X_i,\;\;\mbox{strongly in}\;\;L^2(\tau_{i-1},\tau_i;L^2(\Omega)),\;\;1\leq i\leq n,
\end{equation*}
and
\begin{equation*}
u_{i-1,n,m}\ra u_{i-1,n}^*\;\;\mbox{weakly in}\;\;L^2(\Omega),\;\;2\leq i\leq n,
\end{equation*}
for some $Y_n^*=(y_{1,n}^*,y_{2,n}^*,\ldots,y_{n,n}^*)\in X$ and $U_n^*=(u_{1,n}^*,u_{2,n}^*,\ldots,u_{n-1,n}^*)\in (L^2(\Omega))^{n-1}$.
Passing to the limit in \eqref{Unique-1} and in \eqref{impulse}, it is clear that $(Y_n^*,U_n^*)$ is an optimal solution of $\IOCPn$.

The uniqueness follows from the strict convexity of the functional $\widetilde{J}_n: (L^2(\Omega))^{n-1}\ra [0,+\infty)$ defined by $\widetilde{J}_n(U_n)=J_n(Y_n,U_n)$,
where $Y_n$ is the unique solution of (\ref{impulse}) corresponding to $U_n$.

\medskip

Let us now prove the characterization of the optimal solution given in the proposition.

We assume that $(Y_n^*,U_n^*)$ is the optimal solution of $\IOCPn$. Let us prove the existence of the adjoint state. The argument goes by perturbation of the optimal solution. Given any $U_n=(u_{1,n},u_{2,n},\ldots,u_{n-1,n})\in (L^2(\Omega))^{n-1}$ and any $\l\in (0,1)$, we set
\begin{equation}\label{Prin-5}
U_{n,\l}= U_n^*+\l U_n .
\end{equation}
Let $Y_{n,\l}=(y_{1,n,\l},y_{2,n,\l},\ldots,y_{n,n,\l})$ be the solution of
\begin{equation*}
\left\{\begin{array}{lll}
\p_t y_{i,n,\l}-\triangle y_{i,n,\l}=0&\mbox{in}&\Omega\times
(\tau_{i-1},\tau_i),\quad 1\leq i\leq n,\\
y_{i,n,\l}=0&\mbox{on}&\p\Omega\times (\tau_{i-1},\tau_i),\quad 1\leq i\leq n,\\
y_{i,n,\l}(\tau_{i-1})=y_{i-1,n,\l}(\tau_{i-1})+\chi_\omega (u_{i-1,n}^*+\l u_{i-1,n})&\mbox{in}&\Omega,\quad 2\leq i\leq n,\\
y_{1,n,\l}(0)=y_{n,n,\l}(T)&\mbox{in}&\Omega.
\end{array}\right.
\end{equation*}
Setting $z_{i,n}= \f{y_{i,n,\l}-y_{i,n}^*}{\l}$, $1\leq i\leq n$, we have
\begin{equation}\label{Prin-7}
\left\{\begin{array}{lll}
\p_t z_{i,n}-\triangle z_{i,n}=0&\mbox{in}&\Omega\times
(\tau_{i-1},\tau_i),\quad 1\leq i\leq n,\\
z_{i,n}=0&\mbox{on}&\p\Omega\times (\tau_{i-1},\tau_i),\quad 1\leq i\leq n,\\
z_{i,n}(\tau_{i-1})=z_{i-1,n}(\tau_{i-1})+\chi_\omega u_{i-1,n}&\mbox{in}&\Omega,\quad 2\leq i\leq n,\\
z_{1,n}(0)=z_{n,n}(T)&\mbox{in}&\Omega.
\end{array}\right.
\end{equation}
Since $(Y_n^*,U_n^*)$ is the optimal solution of $\IOCPn$, we have $J_n(Y_{n,\l},U_{n,\l})-J_n(Y_n^*,U_n^*)\geq 0$. Dividing by $\lambda$ and passing the limit $\l\ra 0^+$, using \eqref{Penalty}, \eqref{Prin-5} and \eqref{Prin-7}, we infer that
\begin{equation}\label{Prin-8}
\d{\sum_{i=1}^n \int_{\tau_{i-1}}^{\tau_i}} \langle y_{i,n}^*-y_d,z_{i,n}\rangle \,\mathrm dt
+\d{\f{1}{h_n}\sum_{i=2}^n} \langle u_{i-1,n}^*,u_{i-1,n}\rangle \geq 0,\qquad \forall\; U_n\in (L^2(\Omega))^{n-1}.
\end{equation}
Let $p_n^*$ be defined by \eqref{Prin-2} (same reasoning as in Section \ref{sec21}).
Multiplying the first equation of
(\ref{Prin-7}) by $p_n^*$ and integrating over $\Omega\times (\tau_{i-1},\tau_i)$, we get\begin{equation*}
\langle z_{i,n}(\tau_i),p_n^*(\tau_i)\rangle -\langle z_{i,n}(\tau_{i-1}),p_n^*(\tau_{i-1})\rangle
=\int_{\tau_{i-1}}^{\tau_i} \langle y_{i,n}^*-y_d,z_{i,n}\rangle \,\mathrm dt,\quad 1\leq i\leq n ,
\end{equation*}
and summing over $i=1,2,\ldots,n$, we obtain
\begin{equation}\label{Prin-9}
\sum_{i=1}^n \int_{\tau_{i-1}}^{\tau_i} \langle y_{i,n}^*-y_d,z_{i,n}\rangle \,\mathrm dt
=-\sum_{i=2}^n \langle \chi_\omega p_n^*(\tau_{i-1}),u_{i-1,n}\rangle ,
\end{equation}
which, combined with (\ref{Prin-8}), yields (\ref{Prin-3}).

\medskip

Let us now prove the converse, that is, let us prove that, if $(Y_n^*,U_n^*)$ and $p_n^*$ satisfy \eqref{Prin-1}-\eqref{Prin-2}-\eqref{Prin-3}-\eqref{Prin-4}, then $(Y_n^*,U_n^*)$ is the optimal solution of $\IOCPn$.

Given any $U_n=(u_{1,n},u_{2,n},\ldots,u_{n-1,n})\in (L^2(\Omega))^{n-1}$, we denote by $
Y_n=(y_{1,n},y_{2,n},\ldots,y_{n,n})$ the corresponding solution of \eqref{impulse}. By using arguments similar to those used to establish \eqref{Prin-9}, we obtain that
\begin{equation*}
\sum_{i=1}^n \int_{\tau_{i-1}}^{\tau_i} \langle y_{i,n}^*-y_d,y_{i,n}-y_{i,n}^*\rangle \,\mathrm dt
+\sum_{i=2}^n \langle \chi_\omega p_n^*(\tau_{i-1}),u_{i-1,n}-u_{i-1,n}^*\rangle =0.
\end{equation*}
This, together with (\ref{Prin-3}), implies that
\begin{equation*}
\sum_{i=1}^n \int_{\tau_{i-1}}^{\tau_i} \langle y_{i,n}^*-y_d,y_{i,n}-y_{i,n}^*\rangle \,\mathrm dt
+\f{1}{h_n}\sum_{i=2}^n \langle u_{i-1,n}^*,u_{i-1,n}-u_{i-1,n}^*\rangle =0.
\end{equation*}
Hence
\begin{multline*}
J_n(Y_n,U_n)-J_n(Y_n^*,U_n^*)\\
=\d{\f{1}{2}\sum_{i=1}^n\int_{\tau_{i-1}}^{\tau_i}} \langle y_{i,n}+y_{i,n}^*-2y_d,y_{i,n}-y_{i,n}^*\rangle \,\mathrm dt
+\d{\f{1}{2h_n}\sum_{i=2}^n} \langle u_{i-1,n}+u_{i-1,n}^*,u_{i-1,n}-u_{i-1,n}^*\rangle \\
\geq\d{\sum_{i=1}^n \int_{\tau_{i-1}}^{\tau_i}} \langle y_{i,n}^*-y_d,y_{i,n}-y_{i,n}^*\rangle \,\mathrm dt
+\d{\f{1}{h_n}\sum_{i=2}^n} \langle u_{i-1,n}^*,u_{i-1,n}-u_{i-1,n}^*\rangle
=0.
\end{multline*}
We conclude that $(Y_n^*,U_n^*)$ is the optimal solution of $\IOCPn$.

\subsection{Proof of Theorem \ref{Theorem-Error}}\label{sec_proof_Theorem-Error}

\subsubsection{A first estimate}
The following lemma compares two states generated by controls activated in different ways.

\begin{lemma}\label{compare:3}
Let $0\leq T_1<T_1+\delta<T_2<+\infty$ and let $u\in L^2(\Omega)$. Let $z$ and $w$ be the solutions of
\begin{equation}\label{compare:1}
\left\{\begin{array}{lll}
\p_t z-\triangle z=\d{\f{1}{\delta}}\chi_{(T_1,T_1+\delta)}\chi_\omega u&\mbox{in}&\Omega\times
(T_1,T_2),\\
z=0&\mbox{on}&\p \Omega\times (T_1,T_2),\\
z(T_1)=0 &\mbox{in}&\Omega
\end{array}\right.
\end{equation}
and
\begin{equation}\label{compare:2}
\left\{\begin{array}{lll}
\p_t w-\triangle w=0&\mbox{in}&\Omega\times
(T_1,T_2),\\
w=0&\mbox{on}&\p \Omega\times (T_1,T_2),\\
w(T_1)=\chi_\omega u &\mbox{in}&\Omega.
\end{array}\right.
\end{equation}
For every $p\in[2,\infty)$, there exists $C(T_2,p)>0$ such that
\begin{equation*}
\|z-w\|_{L^p(T_1,T_2;L^2(\Omega))}\leq C(T_2,p) \delta^{\f{1}{p}} \|\chi_\omega u\|.
\end{equation*}
\end{lemma}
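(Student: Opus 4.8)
The plan is to reduce everything to the common datum $g=\chi_\omega u$ (so that $\|g\|=\|\chi_\omega u\|$) and to compare the two mild representations of the solutions. Indeed $w(t)=e^{(t-T_1)\triangle}g$, while by Duhamel $z(t)=\frac{1}{\delta}\int_{T_1}^{\min(t,\,T_1+\delta)}e^{(t-s)\triangle}g\,\mathrm ds$. I would then split the interval $(T_1,T_2)$ into the near part $(T_1,T_1+\delta)$, where the control is still acting, and the far part $(T_1+\delta,T_2)$, where the source has switched off, and bound the two contributions to the $L^p$ norm separately.

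On the near part the estimate is crude but already sharp in $\delta$: the contractivity of $\{e^{t\triangle}\}$ on $L^2(\Omega)$ gives $\|w(t)\|\le\|g\|$ and $\|z(t)\|\le\frac{t-T_1}{\delta}\|g\|\le\|g\|$, hence $\|z(t)-w(t)\|\le 2\|g\|$ pointwise and $\int_{T_1}^{T_1+\delta}\|z-w\|^p\,\mathrm dt\le 2^p\,\delta\,\|g\|^p$, i.e. a contribution of order $\delta^{1/p}\|g\|$.

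The far part is the heart of the matter. For $t\ge T_1+\delta$ one has $z(t)-w(t)=\frac{1}{\delta}\int_{T_1}^{T_1+\delta}\big(e^{(t-s)\triangle}-e^{(t-T_1)\triangle}\big)g\,\mathrm ds$. Writing $e^{(t-s)\triangle}g-e^{(t-T_1)\triangle}g=-\int_{T_1}^{s}\triangle e^{(t-\sigma)\triangle}g\,\mathrm d\sigma$ (fundamental theorem of calculus for the analytic semigroup) and exchanging the order of integration by Fubini, I obtain the representation $z(t)-w(t)=-\frac{1}{\delta}\int_{T_1}^{T_1+\delta}(T_1+\delta-\sigma)\,\triangle e^{(t-\sigma)\triangle}g\,\mathrm d\sigma$. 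The analytic ingredient is the smoothing bound $\|\triangle e^{\tau\triangle}\|_{\mathcal L(L^2(\Omega))}\le C/\tau$, immediate from the spectral theorem for the Dirichlet Laplacian. Using $T_1+\delta-\sigma\le\delta$ and $t-\sigma\ge\frac12(t-T_1)$ (valid when $t-T_1\ge 2\delta$), this yields the pointwise decay $\|z(t)-w(t)\|\le C\|g\|\,\delta/(t-T_1)$; for the remaining band $\delta\le t-T_1\le 2\delta$ the same decay holds since there $\delta/(t-T_1)\ge\frac12$ while the uniform bound $\|z(t)-w(t)\|\le 2\|g\|$ still applies.

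It then remains to integrate this pointwise estimate. Since $p\ge 2>1$, $\int_{T_1+\delta}^{T_2}\big(\delta/(t-T_1)\big)^p\,\mathrm dt\le\delta^p\int_{\delta}^{+\infty}\tau^{-p}\,\mathrm d\tau=\frac{\delta}{p-1}$, so the far-part contribution is at most $\frac{C}{(p-1)^{1/p}}\,\delta^{1/p}\|g\|$. Adding the two parts gives $\|z-w\|_{L^p(T_1,T_2;L^2(\Omega))}\le C(p)\,\delta^{1/p}\|\chi_\omega u\|$, as claimed. The only genuine obstacle I anticipate is the far-part bound: a naive use of $\|\triangle e^{(t-\sigma)\triangle}\|\le C/(t-\sigma)$ alone produces a logarithmic divergence as $t\downarrow T_1+\delta$, and the point is that the extra weight $T_1+\delta-\sigma\le\delta$ supplied by Fubini upgrades the smoothing gain to the integrable rate $\delta^2/(t-T_1)$, which is exactly what makes $\int\tau^{-p}$ converge with the sharp exponent $1/p$.
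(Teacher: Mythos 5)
Your proof is correct, and it takes a genuinely different route from the paper's. The paper argues by duality: for $f\in L^q(T_1,T_2;L^2(\Omega))$ with $q=p/(p-1)$ it introduces the backward problem $\partial_t\varphi+\triangle\varphi=f$, $\varphi(T_2)=0$, invokes Lamberton's maximal $L^q$-regularity theorem to get $\|\partial_t\varphi\|_{L^q(T_1,T_2;L^2(\Omega))}\leq C(T_2,p)\|f\|_{L^q(T_1,T_2;L^2(\Omega))}$, writes $\int_{T_1}^{T_2}\langle z-w,f\rangle\,\mathrm dt=-\bigl\langle \chi_\omega u,\frac{1}{\delta}\int_{T_1}^{T_1+\delta}\int_{T_1}^\tau \partial_t\varphi\,\mathrm dt\,\mathrm d\tau\bigr\rangle$, and extracts the factor $\delta^{1/p}$ from H\"older's inequality on the short interval $(T_1,T_1+\delta)$. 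You instead estimate $z-w$ directly from the mild (Duhamel) representations, splitting $(T_1,T_2)$ into a near region $(T_1,T_1+\delta)$, handled by $L^2$-contractivity of the semigroup, and a far region, handled by the analytic-smoothing bound $\|\triangle e^{\tau\triangle}\|_{\mathcal L(L^2(\Omega))}\leq C/\tau$ combined with the weight $T_1+\delta-\sigma\leq\delta$ coming from Fubini; the resulting pointwise decay $\|z(t)-w(t)\|\leq C\|\chi_\omega u\|\,\delta/(t-T_1)$ integrates to the sharp rate $\delta^{1/p}$ precisely because $p>1$. Both arguments are complete, and each buys something: yours is self-contained (nothing beyond the spectral theorem for the Dirichlet Laplacian) and in fact yields a constant depending only on $p$, hence uniform in $T_2$, which is slightly stronger than the stated estimate, whereas the paper's constant inherits a $T_2$-dependence from Lamberton's theorem; conversely, the duality argument is shorter and more robust, since it transfers to settings (e.g.\ non-self-adjoint generators) where pointwise semigroup decay is less accessible but maximal parabolic regularity is available. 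A minor remark: your closing comment about a ``logarithmic divergence'' in the naive estimate is dispensable and slightly imprecise, but your actual proof never relies on it --- the band $\delta\leq t-T_1\leq 2\delta$ is correctly handled by the uniform bound, so there is no gap.
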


\begin{proof}
By the  definitions  of $z$ and $w$, we have
\begin{equation}\label{compare:5}
 z(t)-w(t)=\int_{T_1}^{t} e^{\triangle(t-\tau)}\f{1}{\delta}\chi_{(T_1,T_1+\delta)}(\tau)\chi_\omega u\,\mathrm d\tau
 -e^{\triangle(t-T_1)} \chi_\omega u,\quad \forall\;t\in [T_1,T_2].
\end{equation}
Let $q=\f{p}{p-1}$ and let $f\in L^q(T_1,T_2;L^2(\Omega))$. Let $\varphi$ be the solution of
\begin{equation}\label{compare:6}
\left\{\begin{array}{lll}
\p_t \varphi+\triangle \varphi=f&\mbox{in}&\Omega\times
(T_1,T_2),\\
\varphi=0&\mbox{on}&\p \Omega\times (T_1,T_2),\\
\varphi(T_2)=0 &\mbox{in}&\Omega.
\end{array}\right.
\end{equation}
By \cite[Theorem 1]{Lamberton}, there exists $C(T_2,p)>0$ such that
\begin{equation}\label{compare:7}
\|\p_t \varphi\|_{L^q(T_1,T_2;L^2(\Omega))} \leq C(T_2,p)\|f\|_{L^q(T_1,T_2;L^2(\Omega))}.
\end{equation}
It follows from (\ref{compare:5}) and (\ref{compare:6}) that
\begin{multline*}
\int_{T_1}^{T_2} \langle z(t)-w(t),f(t) \rangle\, \mathrm dt
=\left\langle \chi_\omega u,\varphi(T_1)-\f{1}{\delta}\int_{T_1}^{T_1+\delta} \varphi(\tau)\,\mathrm d\tau \right\rangle 
=-\left\langle \chi_\omega u,\f{1}{\delta}\int_{T_1}^{T_1+\delta} \int_{T_1}^\tau \p_t \varphi\,\mathrm dt \,\mathrm d\tau\right\rangle,
\end{multline*}
which, together with (\ref{compare:7}), yields
\begin{equation*}
\left|\int_{T_1}^{T_2} \langle z(t)-w(t),f(t) \rangle\, \mathrm dt\right|\leq
\|\chi_\omega u\| \int_{T_1}^{T_1+\delta} \|\p_t \varphi\| \,\mathrm dt
\leq C(T_2,p) \delta^{\f{1}{p}} \|\chi_\omega u\|\|f\|_{L^q(T_1,T_2;L^2(\Omega))}.
\end{equation*}
This leads to the desired result and completes the proof.
\end{proof}

\subsubsection{Proof of the control error estimate}\label{sec232}
In this section, our objective is to establish \eqref{error_estim_control}.

Recalling that $u_n^*$ is defined by \eqref{error-2}, we denote by $y(u_n^*)$ and by $p(u_n^*)$ the solutions of
\begin{equation}\label{error-3}
\left\{\begin{array}{lll}
\p_t y(u_n^*)-\triangle y(u_n^*)=\chi_\omega u_n^*&\mbox{in}&\Omega\times
(0,T),\\
y(u_n^*)=0&\mbox{on}&\p \Omega\times (0,T),\\
y(u_n^*)(0)=y(u_n^*)(T)&\mbox{in}&\Omega
\end{array}\right.
\end{equation}
and
\begin{equation}\label{error-4}
\left\{\begin{array}{lll}
\p_t p(u_n^*)+\triangle p(u_n^*)=y(u_n^*)-y_d&\mbox{in}&\Omega\times
(0,T),\\
p(u_n^*)=0&\mbox{on}&\p \Omega\times (0,T),\\
p(u_n^*)(0)=p(u_n^*)(T)&\mbox{in}&\Omega.
\end{array}\right.
\end{equation}
The existence of these solutions follows from Section \ref{sec21}.
The proof goes in three steps.

\paragraph{Step 1.} We claim that
\begin{equation}\label{error-5}
\d{\int_{\tau_1}^T} \|u^*-u_n^*\|^2\,\mathrm dt = I_1+I_2
\end{equation}
with
$$
I_1 = \d{\sum_{i=2}^n\int_{\tau_{i-1}}^{\tau_i}} \langle \chi_\omega p^*-\chi_\omega p(u_n^*),u^*-u_n^*\rangle \,\mathrm dt ,
\qquad
I_2 = \d{\sum_{i=2}^n\int_{\tau_{i-1}}^{\tau_i}} \langle \chi_\omega p(u_n^*)-\chi_\omega p_n^*(\tau_{i-1}),u^*-u_n^*\rangle \,\mathrm dt,
$$
where $p^*$ and $p_n^*$ are given by \eqref{Maxi-2} and \eqref{Prin-2} respectively.

The claim follows from (\ref{Maxi-3}), (\ref{error-2}) and (\ref{Prin-3}), and from the fact that
$$
\int_{\tau_1}^{T} \|u^*-u_n^*\|^2\,\mathrm dt
=\d{\sum_{i=2}^n\int_{\tau_{i-1}}^{\tau_i}} \langle u^*-u_n^*,u^*-u_n^*\rangle \,\mathrm dt\\
=\d{\sum_{i=2}^n\int_{\tau_{i-1}}^{\tau_i}} \langle \chi_\omega p^*-\chi_\omega p_n^*(\tau_{i-1}),u^*-u_n^*\rangle \,\mathrm dt .
$$

\paragraph{Step 2.} We claim that
\begin{equation}\label{error-6}
I_1\leq  C(T)  h_n  \|y_d\|_{L^2(0,T;L^2(\Omega))}^2 .
\end{equation}
We first infer from \eqref{error-2} that
\begin{equation}\label{error-7}
\begin{array}{lll}
I_1&\equiv&\d{\sum_{i=2}^n\int_{\tau_{i-1}}^{\tau_i}} \langle \chi_\omega (p^*-p(u_n^*)),u^*-u_n^*\rangle \,\mathrm dt\\
&=&\d{\int_0^T} \langle p^*-p(u_n^*),\chi_\omega (u^*-u_n^*)\rangle \,\mathrm dt-
\d{\int_0^{\tau_1}} \langle p^*-p(u_n^*),\chi_\omega u^*\rangle \,\mathrm dt.
\end{array}
\end{equation}
Then, on one hand, by (\ref{Maxi-1}), (\ref{Maxi-2}), (\ref{error-3})
and (\ref{error-4}), we get that
\begin{equation}\label{error-8}
\left\{\begin{array}{lll}
\p_t (y^*-y(u_n^*))-\triangle (y^*-y(u_n^*))=\chi_\omega (u^*-u_n^*)&\mbox{in}&\Omega\times
(0,T),\\
y^*-y(u_n^*)=0&\mbox{on}&\p \Omega\times (0,T),\\
(y^*-y(u_n^*))(0)=(y^*-y(u_n^*))(T)&\mbox{in}&\Omega
\end{array}\right.
\end{equation}
and
\begin{equation}\label{error-9}
\left\{\begin{array}{lll}
\p_t (p^*-p(u_n^*))+\triangle (p^*-p(u_n^*))=y^*-y(u_n^*)&\mbox{in}&\Omega\times
(0,T),\\
p^*-p(u_n^*)=0&\mbox{on}&\p \Omega\times (0,T),\\
(p^*-p(u_n^*))(0)=(p^*-p(u_n^*))(T)&\mbox{in}&\Omega.
\end{array}\right.
\end{equation}
Multiplying the first equation of (\ref{error-8}) by $p^*-p(u_n^*)$ and integrating over $\Omega\times (0,T)$, by (\ref{error-8}) and (\ref{error-9}), we obtain that
\begin{equation}\label{error-10}
\int_0^T \langle p^*-p(u_n^*),\chi_\omega (u^*-u_n^*)\rangle \,\mathrm dt=-\int_0^T \|y^*-y(u_n^*)\|^2\,\mathrm dt\leq 0.
\end{equation}
On the other hand, since $(Y_n^*,U_n^*)$ is the optimal pair for the problem $\IOCPn$, we have $J_n(Y_n^*,U_n^*)\leq J_n(0,0)$.
Then by (\ref{Penalty}), (\ref{def_yn*})  and (\ref{error-2}), it follows that
\begin{equation}\label{error-11}
\int_0^T \|y_n^*\|^2\,\mathrm dt+\int_0^T \|u_n^*\|^2\,\mathrm dt\leq C \int_0^T \|y_d\|^2\,\mathrm dt .
\end{equation}
From (\ref{error-11}), (\ref{error-3}), (\ref{error-4}) and Lemma~\ref{Energy}, we infer that
\begin{equation}\label{error-12}
\|y(u_n^*)\|_{C([0,T];H_0^1(\Omega))}+\|p(u_n^*)\|_{C([0,T];H_0^1(\Omega))\cap H^1(0,T;L^2(\Omega))}\leq  C(T) \|y_d\|_{L^2(0,T;L^2(\Omega))}.
\end{equation}
 Since $(y^*,u^*)$ is the optimal pair for the problem $\OCP$, we have $J(y^*,u^*)\leq J(0,0)$. Then by (\ref{def_J}), we get
\begin{equation}\label{wang-3}
\int_0^T \|y^*\|^2\,\mathrm dt+\int_0^T \|u^*\|^2\,\mathrm dt\leq C\int_0^T \|y_d\|^2\,\mathrm dt,
\end{equation}
which, combined with (\ref{Maxi-2}) and Lemma~\ref{Energy}, implies that
\begin{equation}\label{wang-4}
\|p^*\|_{C([0,T];L^2(\Omega))}\leq C(T)\|y_d\|_{L^2(0,T;L^2(\Omega))}.
\end{equation}
 By (\ref{error-7}), (\ref{error-10}), (\ref{Maxi-3}), (\ref{error-12}) and (\ref{wang-4}), we get that
\begin{eqnarray*}
I_1&\leq& -\d{\int_0^{\tau_1}} \langle p^*-p(u_n^*),\chi_\omega p^*\rangle \,\mathrm dt\\
&\leq&
\tau_1\|p^*-p(u_n^*)\|_{C([0,T];L^2(\Omega))}\|p^*\|_{C([0,T];L^2(\Omega))}
\leq  C(T)  h_n \|y_d\|_{L^2(0,T;L^2(\Omega))}^2 ,
\end{eqnarray*}
and \eqref{error-6} follows.

\paragraph{Step 3.} We claim that
\begin{equation}\label{error-13}
I_2\leq \d{\f{1}{2}\int_{\tau_1}^{T}} \|u^*-u_n^*\|^2\,\mathrm dt+ C(T) h_n \|y_d\|_{L^2(0,T;L^2(\Omega))}^2 .
\end{equation}
We first note that
\begin{equation}\label{error-14}
\begin{array}{lll}
I_2&\equiv&\d{\sum_{i=2}^n\int_{\tau_{i-1}}^{\tau_i}} \langle \chi_\omega p(u_n^*)-\chi_\omega p_n^*(\tau_{i-1}),u^*-u_n^*\rangle \,\mathrm dt\\
&\leq&\d{\f{1}{2} \int_{\tau_1}^{T}} \|u^*-u_n^*\|^2\,\mathrm dt
+\d{\f{1}{2}\sum_{i=2}^n \int_{\tau_{i-1}}^{\tau_i}} \|p(u_n^*)-p_n^*(\tau_{i-1})\|^2\,\mathrm dt.
\end{array}
\end{equation}
Then, we proceed with three sub-steps.
\begin{itemize}
\item {\bf Sub-step 3.1.}
Let us prove that
\begin{equation}\label{error-15}
\sum_{i=2}^n \int_{\tau_{i-1}}^{\tau_i} \|p(u_n^*)-p_n^*(\tau_{i-1})\|^2\,\mathrm dt
\leq  C(T)   h_n^2 \|y_d\|_{L^2(0,T;L^2(\Omega))}^2 +  C(T)  \int_{0}^{T} \|y(u_n^*)-y_{n}^*\|^2\,\mathrm dt.
\end{equation}
By (\ref{error-12}), we have
\begin{multline}\label{error-16}
\d{\sum_{i=2}^n \int_{\tau_{i-1}}^{\tau_i}} \|p(u_n^*)-p(u_n^*)(\tau_{i-1})\|^2\,\mathrm dt
=\d{\sum_{i=2}^n\int_{\tau_{i-1}}^{\tau_i}}\Big\|\d{\int_{\tau_{i-1}}^t}\p_s p(u_n^*)\,\mathrm ds\Big\|^2\,\mathrm dt \\
\leq h_n^2\|\p_t p(u_n^*)\|_{L^2(0,T;L^2(\Omega))}^2\leq   C(T)  h_n^2 \|y_d\|_{L^2(0,T;L^2(\Omega))}^2 .
\end{multline}
Moreover, from (\ref{Prin-2}), (\ref{error-4}) and Lemma~\ref{Energy}, we obtain that
\begin{multline*}
\d{\sum_{i=2}^n \int_{\tau_{i-1}}^{\tau_i}} \|p(u_n^*)(\tau_{i-1})-p_n^*(\tau_{i-1})\|^2\,\mathrm dt
\leq T\|p(u_n^*)-p_n^*\|_{C([0,T];L^2(\Omega))}^2  
\leq  C(T)  \d{\int_0^T}\|y(u_n^*)-y_n^*\|^2\,\mathrm dt.
\end{multline*}
Combined with (\ref{error-16}), this gives (\ref{error-15}).

\item {\bf Sub-step 3.2.} We claim that
\begin{equation}\label{error-17}
\|e^{t\triangle}(y(u_n^*)(0)-y_{n}^*(0))\|_{L^p(0,T;L^2(\Omega))}\leq
 C(T,p)  h_n^{1/p} \|y_d\|_{L^2(0,T;L^2(\Omega))} ,
\end{equation}
for every $p\in [2,\infty)$.

Indeed, by (\ref{Prin-1}), (\ref{Prin-4}), (\ref{error-3}) and (\ref{error-2}), we have
\begin{equation*}
\left\{
\begin{array}{l}
y_n^*(0)=e^{T\triangle}y_n^*(0)+\d{\sum_{j=2}^n}  e^{(T-\tau_{j-1})\triangle} \chi_\omega u_{j-1,n}^*,\\
y(u_n^*)(0)=e^{T\triangle}y(u_n^*)(0)+\d{\sum_{j=2}^n\f{1}{h_n}\int_{\tau_{j-1}}^{\tau_j}}e^{(T-t)\triangle}\chi_\omega u_{j-1,n}^*\,\mathrm dt.
\end{array}\right.
\end{equation*}
Then
\begin{multline}\label{error-19}
y(u_n^*)(0)-y_n^*(0)=e^{T\triangle} (y(u_n^*)(0)-y_n^*(0))\\
+\d{\sum_{j=2}^n} \left(\f{1}{h_n}\int_{\tau_{j-1}}^{\tau_j}e^{(T-\tau)\triangle}\chi_\omega u_{j-1,n}^*\mathrm d\tau
-e^{(T-\tau_{j-1})\triangle}\chi_\omega u_{j-1,n}^*\right).
\end{multline}
It follows that
\begin{equation}
\begin{split}
&\|e^{t\triangle}(y(u_n^*)(0)-y_{n}^*(0))\|_{L^p(0,T;L^2(\Omega))}\nb\\
&\leq (1-e^{-\l_1 T})^{-1} \d{\sum_{j=2}^n}
\left\|\f{1}{h_n}\int_{\tau_{j-1}}^{\tau_j}e^{(T-\tau+t)\triangle}\chi_\omega u_{j-1,n}^*\mathrm d\tau
-e^{(T-\tau_{j-1}+t)\triangle}\chi_\omega u_{j-1,n}^*\right\|_{L^p(0,T;L^2(\Omega))}\nb\\
&\leq  C(T)  \d{\sum_{j=2}^n} \left\|\f{1}{h_n}\int_{\tau_{j-1}}^{\tau_j}e^{(t-\tau)\triangle}\chi_\omega u_{j-1,n}^*\mathrm d\tau
-e^{(t-\tau_{j-1})\triangle}\chi_\omega u_{j-1,n}^*\right\|_{L^p(\tau_j,2T;L^2(\Omega))},\label{error-20}
\end{split}
\end{equation}
where $-\l_1<0$ is the first eigenvalue of the Dirichlet Laplacian.
Moreover, by Lemma~\ref{compare:3} with $T_1=\tau_{j-1}, \delta=h_n, T_2=2T$ and $u=u_{j-1,n}^*$,
we get that
\begin{multline}\label{error-21}
\left\|\d{\f{1}{h_n}\int_{\tau_{j-1}}^t} e^{(t-\tau)\triangle}\chi_{(\tau_{j-1},\tau_j)}(\tau)\chi_\omega u_{j-1,n}^*\mathrm d\tau
-e^{(t-\tau_{j-1})\triangle}\chi_\omega u_{j-1,n}^*\right\|_{L^p(\tau_{j-1},2T;L^2(\Omega))}\\
\leq  C(T,p)   h_n^{1/p}\|\chi_\omega u_{j-1,n}^*\|,
\end{multline}
for every $j\in \{2,\ldots,n\}$.
Since
\begin{equation*}
\int_{\tau_{j-1}}^t e^{(t-\tau)\triangle}\chi_{(\tau_{j-1},\tau_j)}(\tau)\chi_\omega u_{j-1,n}^*\mathrm d\tau
=\int_{\tau_{j-1}}^{\tau_j} e^{(t-\tau)\triangle}\chi_\omega u_{j-1,n}^*\mathrm d\tau,\quad \forall\;t\in [\tau_j,2T],
\end{equation*}
by (\ref{error-20}) and (\ref{error-2}), we obtain that
\begin{equation}\label{error-22}
\begin{split}
\|e^{t\triangle}(y(u_n^*)(0)-y_{n}^*(0))\|_{L^p(0,T;L^2(\Omega))}
&\leq  C(T,p) \d{\sum_{j=2}^n} h_n^{1/p}\|u_{j-1,n}^*\| \\
&\leq  C(T,p)  h_n^{1/p}\left(\d{\sum_{j=2}^n} h_n\right)^{\f{1}{2}} \left(\d{\sum_{j=2}^n} \f{1}{h_n}\|u_{j-1,n}^*\|^2\right)^{\f{1}{2}}\\
&\leq  C(T,p)  h_n^{1/p}\|u^*_n\|_{L^2(0,T;L^2(\Omega))},
\end{split}
\end{equation}
which, combined with (\ref{error-11}), implies (\ref{error-17}).

\item {\bf Sub-step 3.3.} Let us prove that
\begin{equation}\label{error-23}
\|y(u_n^*)-y_n^*\|_{L^p(0,T;L^2(\Omega))}\leq
 C(T,p)   h_n^{1/p} \|y_d\|_{L^2(0,T;L^2(\Omega))} ,
\end{equation}
for every $p\in [2,+\infty)$.

Let $(z_{j,n})_{1\leq j\leq n}$ and $(w_{j,n})_{1\leq j\leq n}$ be
solutions of (\ref{compare:1}) and (\ref{compare:2}) respectively, with
$T_1=\tau_{j-1}, \delta=h_n, T_2=T$ and $u=u_{j-1,n}^*$. We set
\begin{equation*}
\widetilde{z}_{j,n}(t)=
\left\{
\begin{array}{ll}
0,&t\in (0,\tau_{j-1}],\\
z_{j,n}(t),&t\in (\tau_{j-1},T),
\end{array}\right.\;\;\mbox{and}\quad
\widetilde{w}_{j,n}(t)=
\left\{
\begin{array}{ll}
0,&t\in (0,\tau_{j-1}],\\
w_{j,n}(t),&t\in (\tau_{j-1},T).
\end{array}\right.
\end{equation*}
By (\ref{Prin-1}), (\ref{Prin-4}), (\ref{error-2}) and (\ref{error-3}),
we have
\begin{equation}\label{error-25}
\begin{split}
y(u_n^*)(t)-y_n^*(t)
&=e^{t\triangle}(y(u_n^*)(0)-y_n^*(0))\\
&+\d{\sum_{j=1}^i} \left(\d{\int_{\tau_{j-1}}^{\tau_{j}}} \chi_{(\tau_{j-1},t)}(s)e^{(t-s)\triangle}\chi_\omega u_n^*(s)\,\mathrm ds
-e^{(t-\tau_{j-1})\triangle}\chi_\omega u_{j-1,n}^*\right)\\
&=e^{t\triangle}(y(u_n^*)(0)-y_n^*(0))+\d{\sum_{j=1}^i}(z_{j,n}(t)-w_{j,n}(t)) ,
\end{split}
\end{equation}
for every $t\in (\tau_{i-1},\tau_i]$ and every $i\in \{1,\ldots,n\}$.
Then
\begin{equation}\label{error-26}
y(u_n^*)(t)-y_n^*(t)\\
=e^{t\triangle}(y(u_n^*)(0)-y_n^*(0))
+\d{\sum_{j=1}^n} \chi_{(\tau_{j-1},T)}(t)\left(\widetilde{z}_{j,n}(t)-\widetilde{w}_{j,n}(t)\right),
\end{equation}
for every $t\in (0,T)$.
Indeed, given any $t\in (0,T)$, let $i_0\in \{1,\ldots,n\}$ be such that $t\in (\tau_{i_0-1},\tau_{i_0}]$.
It follows from (\ref{error-25}) that
\begin{equation*}
\begin{split}
y(u_n^*)(t)-y_n^*(t)&=e^{t\triangle}(y(u_n^*)(0)-y_n^*(0))+\d{\sum_{j=1}^{i_0}} (z_{j,n}(t)-w_{j,n}(t))\\
&=e^{t\triangle}(y(u_n^*)(0)-y_n^*(0))+\d{\sum_{j=1}^{i_0}} \left(\widetilde{z}_{j,n}(t)-\widetilde{w}_{j,n}(t)\right)\\
&=e^{t\triangle}(y(u_n^*)(0)-y_n^*(0))+\d{\sum_{j=1}^{i_0}} \chi_{(\tau_{j-1},T)}(t)
\left(\widetilde{z}_{j,n}(t)-\widetilde{w}_{j,n}(t)\right),
\end{split}
\end{equation*}
which yields (\ref{error-26}). By (\ref{error-26}), (\ref{error-17}) and Lemma~\ref{compare:3},
we obtain that
\begin{equation*}
\begin{split}
&\|y(u_n^*)-y_{n}^*\|_{L^p(0,T;L^2(\Omega))}\\
&\leq \|e^{t\triangle} (y(u_n^*)(0)-y_{n}^*(0))\|_{L^p(0,T;L^2(\Omega))}
+\d{\sum_{j=1}^n} \|z_{j,n}-w_{j,n}\|_{L^p(\tau_{j-1},T;L^2(\Omega))}\\
&\leq  C(T,p) \left(h_n^{1/p} \|y_d\|_{L^2(0,T;L^2(\Omega))} +\d{\sum_{j=1}^n} h_n^{1/p} \|u_{j-1,n}^*\|\right).
\end{split}
\end{equation*}
Using (\ref{error-11}) and the same arguments as in (\ref{error-22}), we obtain (\ref{error-23}).
\end{itemize}
Step 3 follows immediately from (\ref{error-14}), (\ref{error-15}) and (\ref{error-23}).\\
Finally, the theorem follows from (\ref{error-5}), (\ref{error-6}), (\ref{error-13}), (\ref{error-2}), (\ref{Maxi-3})
 and (\ref{wang-4}).

\subsubsection{Proof of the state  and cost functional error estimates} 
In this section, our objective is to establish \eqref{wang-1}  and (\ref{Lp-estimate:1}). 

We start with the case $2\leq p<+\infty$.
By the triangular inequality, we have
\begin{equation}\label{Lp-estimate:2}
\|y^*-y_n^*\|_{L^p(0,T;L^2(\Omega))}\leq \|y^*-y(u_n^*)\|_{L^p(0,T;L^2(\Omega))}+\|y(u_n^*)-y_n^*\|_{L^p(0,T;L^2(\Omega))}.
\end{equation}
We infer from \eqref{Maxi-1}, \eqref{error-3} and from Lemma \ref{Energy} that
\begin{equation}\label{Lp-estimate:3}
\|y^*-y(u_n^*)\|_{L^p(0,T;L^2(\Omega))} \leq T^{\f{1}{p}}\|y^*-y(u_n^*)\|_{C([0,T];L^2(\Omega))}\leq C(T,p)\|u^*-u_n^*\|_{L^2(0,T;L^2(\Omega))}.
\end{equation}
Since $p\geq 2$, it follows from \eqref{Lp-estimate:3} and from the first part of Theorem \ref{Theorem-Error} that
\begin{equation*}
\|y^*-y(u_n^*)\|_{L^p(0,T;L^2(\Omega))}\leq C(T,p) \sqrt{h_n} \|y_d\|_{L^2(0,T;L^2(\Omega))} 
\leq C(T,p) h_n^{1/p} \|y_d\|_{L^2(0,T;L^2(\Omega))} ,
\end{equation*}
which, combined with \eqref{Lp-estimate:2} and \eqref{error-23}, gives \eqref{Lp-estimate:1}.

 Finally, (\ref{wang-1}) follows from (\ref{def_J}), (\ref{Penalty}), (\ref{def_yn*}), (\ref{error-2}), (\ref{error_estim_control}), (\ref{Lp-estimate:1}), (\ref{error-11}) and (\ref{wang-3}).

\subsection{Proof of Theorem \ref{infty-estimate}}\label{sec_proof_infty-estimate}
\subsubsection{A general result in measure theory}
\begin{lemma}\label{Measure}
Let $\omega$ be a measurable subset of $\Omega$ having a $C^2$ boundary $\p\omega$.
For $\e>0$, we define
\begin{equation}\label{measurable set}
\omega_\e=\{x\in \mathbb{R}^N\ \mid\ d(x,\p\omega)\leq \e\},
\end{equation}
where $d(x,\p\omega)= \inf\{ |x-y|\ \mid \ y\in \p\omega\}$.
There exists $\mu>0$ such that
\begin{equation}\label{measure-1}
|\omega_\e|=\int_0^{\e}|\p\omega_\eta|
\,\mathrm d\eta \leq 2(1+\e/\mu)^{N-1}|\p\omega|\e,
\end{equation}
for every $\e\in (0,\mu)$.
\end{lemma}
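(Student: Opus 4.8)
The plan is to exploit the tubular-neighborhood structure of the boundary hypersurface $\p\omega$ together with the coarea formula. Since $\omega\subset\Omega$ is bounded and $\p\omega$ is of class $C^2$, the set $\p\omega$ is a compact embedded $C^2$ hypersurface, so its principal curvatures $\kappa_1,\dots,\kappa_{N-1}$ are continuous and uniformly bounded. I would first fix $\mu>0$ to be the normal injectivity radius of $\p\omega$, shrunk if necessary so that $\mu\sup_{\p\omega}|\kappa_i|\leq 1$ for each $i$; then the normal map $\Phi(y,s)=y+s\,\nu(y)$, with $\nu$ a unit normal field along $\p\omega$, is a $C^1$ diffeomorphism from $\p\omega\times(-\mu,\mu)$ onto the open tube $\{x:\ d(x,\p\omega)<\mu\}$, on which the distance function is $C^1$ away from $\p\omega$ and satisfies $|\nabla d|=1$ almost everywhere.

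First I would establish the integral identity $|\omega_\e|=\int_0^\e|\p\omega_\eta|\,\mathrm d\eta$. Since the $C^2$ hypersurface $\p\omega=\{d=0\}$ is Lebesgue-null, we have $\omega_\e=\{0\leq d\leq\e\}$ up to a null set, and applying the coarea formula to the Lipschitz function $d$ together with $|\nabla d|=1$ gives
$$|\omega_\e|=\int_{\{0<d<\e\}}|\nabla d|\,\mathrm dx=\int_0^\e \mathcal H^{N-1}(\{d=\eta\})\,\mathrm d\eta=\int_0^\e|\p\omega_\eta|\,\mathrm d\eta,$$
where I use that, for $\eta\in(0,\mu)$, the topological boundary $\p\omega_\eta$ coincides with the level set $\{d=\eta\}$.

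Next I would bound $|\p\omega_\eta|$ for $\eta\in(0,\mu)$. Here $\{d=\eta\}$ is the disjoint union of the two parallel sheets $S_\eta^{\pm}=\{\Phi(y,\pm\eta):\ y\in\p\omega\}$, and parametrizing each by $\Phi(\cdot,\pm\eta)$ one has, from the classical expression of the Jacobian of the normal map in terms of the principal curvatures, the $(N-1)$-dimensional area element $\prod_{i=1}^{N-1}|1\mp\eta\kappa_i(y)|\,\mathrm d\sigma(y)$ on $S_\eta^{\pm}$. By the area formula this yields
$$|\p\omega_\eta|=\int_{\p\omega}\prod_{i=1}^{N-1}|1-\eta\kappa_i|\,\mathrm d\sigma+\int_{\p\omega}\prod_{i=1}^{N-1}|1+\eta\kappa_i|\,\mathrm d\sigma.$$
Since $|1\pm\eta\kappa_i|\leq 1+\eta|\kappa_i|\leq 1+\eta/\mu$ by the choice of $\mu$, each integrand is bounded by $(1+\eta/\mu)^{N-1}$, and adding the two sheets gives $|\p\omega_\eta|\leq 2(1+\eta/\mu)^{N-1}|\p\omega|$. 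Finally, as $\eta\mapsto(1+\eta/\mu)^{N-1}$ is nondecreasing, inserting this into the identity above gives
$$|\omega_\e|=\int_0^\e|\p\omega_\eta|\,\mathrm d\eta\leq 2|\p\omega|\int_0^\e(1+\eta/\mu)^{N-1}\,\mathrm d\eta\leq 2(1+\e/\mu)^{N-1}|\p\omega|\,\e,$$
which is the desired estimate.

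I expect the main obstacle to be the careful justification of the normal-coordinate picture for a merely $C^2$ boundary: the existence of a uniform $\mu>0$, the $C^1$ diffeomorphism property of $\Phi$ onto the tube, the identity $|\nabla d|=1$, and above all the Jacobian formula $\prod_i|1\mp\eta\kappa_i|$. These are standard facts from differential and geometric measure theory, but they must be invoked at exactly the $C^2$ level of regularity, which is precisely what makes the principal curvatures continuous and hence uniformly bounded, thereby allowing the choice of $\mu$ that produces the stated exponent $N-1$ and the factor $2$.
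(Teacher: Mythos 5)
Your proof is correct and takes essentially the same route as the paper: both reduce the estimate via the coarea formula for $d(\cdot,\p\omega)$ to bounding the areas of the two parallel sheets $\{d=\eta\}$, and both bound the Jacobian of the normal map $y\mapsto y\pm\eta\,\nu(y)$ by $\prod_i(1+\eta|\kappa_i|)\leq(1+\eta/\mu)^{N-1}$ using a curvature bound $|\kappa_i|\leq 1/\mu$. The only cosmetic difference is that you obtain $\mu$ from the normal injectivity radius and quote the classical parallel-surface Jacobian formula, whereas the paper builds the tubular coordinates by hand (local principal charts, inverse function theorem, and the uniform interior/exterior sphere condition to define $\mu$); the underlying argument is the same.
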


In \eqref{measure-1}, without ambiguity, $|\omega_\e|$ designates the Lebesgue measure of $\omega_\e$, and $|\p\omega_\eta|=\mathcal H^{N-1}(\p\omega_\eta)$ designates the $(N-1)$-Hausdorff measure of $\p\omega_\eta$.

We give a proof of this result for completeness, borrowing arguments from \cite{Gilbarg}.

\begin{remark}
In the proof below, the assumption $\p\omega\in C^2$ is required.
For the general case, whether (\ref{measure-1}) holds or not seems to be open.
\end{remark}

\begin{proof}
For every $y\in \p\omega$, let $\nu(y)$ and $\Gamma(y)$ respectively denote the unit inner normal to $\p\omega$ at $y$ and the tangent hyperplane to $\p\omega$ at $y$. The curvatures of $\p\omega$ at a fixed point $y_0\in \p\omega$ are determined as follows. By a rotation of coordinates, we assume that the $x_N$ coordinate axis lies in the
direction $\nu(y_0)$. In some neighborhood $\mathcal{N}(y_0)$ of $y_0$, we have $ \mathcal{N}(y_0)\cap\p\omega=\{x_N=\varphi(x^\prime)\}$,
 where $x^\prime=(x_1,\dots,x_{N-1}), \varphi\in C^2(\Gamma(y_0)\cap \mathcal{N}(y_0))$ and $D\varphi(y_0^\prime)=0$.
The eigenvalues $\kappa_1,\ldots,\kappa_{N-1}$ of the Hessian matrix $D^2\varphi(y_0^\prime)$ are the principal curvatures of $\p\omega$ at $y_0$ and the corresponding eigenvectors are the principal directions of $\p\omega$ at $y_0$. By an additional rotation of coordinates, we assume that the $x_1,\ldots,x_{N-1}$ axes lie
along principal directions corresponding to $\kappa_1,\ldots,\kappa_{N-1}$ at $y_0$. Such a coordinate system is said to be a principal coordinate system at $y_0$. The Hessian matrix $D^2\varphi(y_0^\prime)$ with respect to the principal
coordinate system at $y_0$ described above is given by $D^2\varphi(y_0^\prime)=\mbox{diag}(\kappa_1,\ldots,\kappa_{N-1})$.
The unit inner normal vector $\nu(y)$ at the point $y=(y^\prime,\varphi(y^\prime))\in \mathcal{N}(y_0)\cap \p\omega$
is given by
\begin{equation*}
\nu_i(y)=-\f{D_i \varphi(y^\prime)}{\sqrt{1+|D\varphi(y^\prime)|^2}},\quad 1\leq i\leq N-1,
\qquad\nu_N(y)=\f{1}{\sqrt{1+|D\varphi(y^\prime)|^2}}.
\end{equation*}
Hence, with respect to the principal coordinate system at $y_0$, we have
\begin{equation}\label{measure-2}
D_j v_i(y_0)=-\kappa_i \delta_{ij},\quad i, j=1,\ldots,N-1.
\end{equation}
Since $\p\omega$ is $C^2$, $\p\omega$ satisfies a uniform interior and exterior sphere condition,
i.e., at each
point $y_0\in \p\omega$, there exist two balls $B_1$ and $B_2$ depending on $y_0$ such that
$\o B_1\cap (\mathbb{R}^N-\omega)=\{y_0\}$ and $\o B_2\cap \o \omega=\{y_0\}$,
and the radii of the balls $B_1$ and $B_2$ are bounded below by a positive constant denoted by $\mu$.
It is easy to show that $\mu^{-1}$ bounds the principal curvatures of $\p\omega$.

The rest of the proof goes in two steps.

\paragraph{Step 1.} Let us prove that $\omega_\e$ $(0<\e<\mu)$ has a $C^1$-smooth manifold structure.

Given any point $x$ such that $d(x,\p\omega)<\mu$, there exists a unique point $y=y(x)\in \p\omega$ satisfying $|x-y|=d(x,\p\omega)$. We have $x=y+\nu(y) d(x,\p\omega) $ if $x\in\omega$ and $x=y-\nu(y) d(x,\p\omega)$ if $x\not\in\omega$.
Now we give a construction of a $C^1$-smooth manifold structure on $\omega_\e$.
For this purpose, we fix a $y_0\in \p\omega$ and we define the $C^1$ map $\Phi_0$ from
$\mathcal{U}= (\Gamma(y_0)\cap \mathcal{N}(y_0))\times (-\mu,\mu)$
to $\mathbb{R}^N$ by
\begin{equation}\label{measure-4}
\Phi_0(y^\prime,d)= y+\nu(y)d,\quad \forall\;
(y^\prime,d)\in (\Gamma(y_0)\cap \mathcal{N}(y_0))\times (-\mu,\mu),
\end{equation}
where $y=(y^\prime,\varphi(y^\prime))$. By (\ref{measure-2}), the Jacobian matrix of $\Phi_0$
at $(y_0^\prime,d)$ is $D\Phi_0(y_0^\prime,d)=\mbox{diag}(1-\kappa_1 d,\ldots,1-\kappa_{N-1}d,1)$, and hence $\det D\Phi_0(y_0^\prime,d)=(1-\kappa_1 d)\cdots(1-\kappa_{N-1}d)\neq 0$, for every $d\in (-\mu,\mu)$.
It follows from the inverse function theorem that $\Phi_0$ is a local $C^1$-diffeomorphism in a neighborhood of any point of the line $\{y_0^\prime\}\times(-\mu,\mu)$.
Then by compactness of $[-\e,\e]$,
we can choose $\mathcal U_0=B_0\times[-\e,\e]$,
with $B_0$ an open ball in $\Gamma(y_0)\cap \mathcal N(y_0)$,
so that $\Phi_0$ is a $C^1$-diffeomorphism from $\mathcal U_0$ to
$\Phi_0(\mathcal U_0)$. This shows that $(\Phi_0(\mathcal U_0), \Phi_0^{-1})$ is
a coordinate chart centered at
$y_0$ in the topological space $\omega_\e$.

We carry on the above process for each $y\in\p\omega$ and we
define an atlas $\{(V_\alpha, \Phi_\alpha^{-1})\}$ for $\omega_\e$,
where $V_\alpha$ is an open neighborhood of $y_\alpha\in\p\omega$,
$\Phi_\alpha^{-1}(V_\alpha)=\mathcal U_\alpha= B_\alpha\times[-\e,\e]$
and $B_\alpha$ is an open ball in $\Gamma(y_\alpha)\cap \mathcal N(y_\alpha)$.
By the definition of $\Phi_\alpha$ (similar to (\ref{measure-4})), one can check that any two charts in
$\{(V_\alpha,\Phi_\alpha^{-1})\}$ are $C^1$-smoothly compatible one with each other.
Hence $\{(V_\alpha, \Phi_\alpha^{-1})\}$ is a $C^1$ atlas for $\omega_\e$.
This atlas induces a $C^1$ structure on $\omega_\e$.

\paragraph{Step 2.} Let us establish (\ref{measure-1}).

By \cite[Lemma 14.16, page 355]{Gilbarg}, we have $d(\cdot,\p\omega)\in C^2(\omega_\e)$ and $|\nabla d(\cdot,\p\omega)|=1$ in $\omega_\e$,
which, combined with the Coarea Formula (see, e.g., \cite{Evans}) applied to $f=d(\cdot,\p\omega)$, gives
\begin{multline}\label{measure-5}
|\omega_\e|=\d{\int_{\omega_\e}} |\nabla d(x,\p\omega)|\,\mathrm dx
=\d{\int_0^\e } \mathcal H^{N-1}(\{d(\cdot,\p\omega)=\eta\})\,\mathrm d\eta 
=\d{\int_0^\e} |\p\omega_\eta|\,\mathrm d\eta=\d{\int_0^\e} (|\p\omega_\eta^+|+|\p\omega_\eta^-|)\,\mathrm d\eta,
\end{multline}
where $\p\omega_\delta^+$ and $\p\omega_\delta^-$ are the inner and outer parts
(with respect to $\omega$) of $\p\omega_\delta$ for each $\delta\in(0,\e)$, respectively.
Now, given any $\eta\in (0, \e)$, to compute $|\p\omega_\eta^+|$ and $|\p\omega_\eta^-|$,
we  define $\psi_\eta: \p\omega\ra \p\omega_\eta^+$ by
$\psi_\eta(y_\alpha)=\Phi_\alpha\circ\tau_\eta\circ\Phi_\alpha^{-1}(y_\alpha)$, for every $y_\alpha\in \p\omega$,
where $\tau_\eta$ is the mapping given by $\tau_\eta(z,0)= (z,\eta)$ for every $z\in\mathbb{R}^{N-1}$.
From Step 1, we take two arbitrary coordinate charts $\{(V_\beta, \Phi_\beta^{-1})\}$ and $\{(V_\gamma, \Phi_\gamma^{-1})\}$,
where $V_\beta$ and $V_\gamma$ are  open neighborhood of $y_\beta$ and $y_\gamma$ ( $y_\beta,y_\gamma\in \partial\omega$), respectively.
Then by the definitions of $\Phi_\beta$ and $\Phi_\gamma$ (similar to (\ref{measure-4})),
one can check that
\begin{equation}\label{compactible-psi}
 \Phi_\beta\circ\tau_\eta\circ\Phi_\beta^{-1}(y)=\Phi_\gamma\circ\tau_\eta\circ\Phi_\gamma^{-1}(y),
 \quad \forall\,y\in V_\beta \cap V_\gamma\cap \partial\omega.
\end{equation}
We recall from Step 1 that each $\Phi_\alpha^{-1}$ is $C^1$ diffeomorphic from $V_\alpha$ to
$\mathcal U_\alpha=\Phi_\alpha(V_\alpha)$. Therefore, by (\ref{compactible-psi}),
$\psi_\eta$ is $C^1$ diffeomorphic from $\p\omega$ onto $\p\omega_\eta^+$ and
$$
\det (\Phi_\alpha\circ\tau_\eta\circ\Phi_\alpha^{-1})(y_\alpha)=(1-\kappa_1(y_\alpha)\eta)\cdots(1-\kappa_{N-1}(y_\alpha)\eta)
\in((1-\e\mu^{-1})^{N-1},(1+\e\mu^{-1})^{N-1}),
$$
for every $y_\alpha\in \p\omega$.
This, together with the definition of $\psi_\eta$ and (\ref{compactible-psi}),  implies  that
\begin{equation}\label{measure-6}
|\p\omega_\eta^+|=\int_{\p\omega} \mbox{det}[\psi_\eta](x)\,\mathrm d\sigma
\leq (1+\eta\mu^{-1})^{N-1} |\p\omega|,\quad \forall\;\eta\in (0,\e).
\end{equation}
Similarly, we have $|\p\omega_\eta^-|\leq (1+\eta\mu^{-1})^{N-1} |\p\omega|$, for every $\eta\in (0,\e)$.
Then, (\ref{measure-1}) follows from the latter inequality, from (\ref{measure-6}) and (\ref{measure-5}).

This completes the proof.
\end{proof}

\subsubsection{Smooth regularizations of characteristic functions}
We define the $C^\infty$ function $\chi_\omega^\e: \mathbb{R}^N\ra \mathbb{R}$ by
\begin{equation}\label{moll-1}
\chi_\omega^\e(x)= \int_\omega \eta_\e(x-y)\chi_\omega(y)\,\mathrm dy ,
\end{equation}
for every $x\in\mathbb{R}^N$, where
\begin{equation}\label{moll-2}
\eta_\e(x)= \f{1}{\e^N}\eta\left(\f{x}{\e}\right),
\qquad
\eta(x)= \left\{
\begin{array}{ll}
c \exp\left(\f{1}{|x|^2-1}\right)&\mbox{if}\;\;|x|<1,\\
0&\mbox{if}\;\;|x|\geq 1,
\end{array}
\right.
\end{equation}
with $c>0$ such that $\int_{\mathbb{R}^N}\eta(x)\,\mathrm dx=1$.

\begin{lemma}\label{moll-4}
 Let $\mu$ be as in Lemma \ref{Measure} and let $\e\in (0,\mu)$. For every $p\in[1,+\infty]$, we have
\begin{equation}\label{moll-5}
\|\nabla \chi_\omega^\e\|_{L^p(\Omega)}\leq C\e^{-1+\f{1}{p}}
\quad\mbox{and}\quad\|\chi_\omega^\e-\chi_\omega\|_{L^p(\Omega)}\leq C\e^{\f{1}{p}}.
\end{equation}
\end{lemma}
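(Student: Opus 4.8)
The plan is to exploit the fact that $\chi_\omega^\e=\eta_\e*\chi_\omega$ (which is what \eqref{moll-1} amounts to, since $\chi_\omega\equiv 1$ on $\omega$) is a standard mollification, so that all relevant quantities are \emph{localized} near $\p\omega$, and then to feed in the measure estimate of Lemma~\ref{Measure}. First I would record the key structural observation: since $\eta_\e$ is supported in the ball of radius $\e$ (by \eqref{moll-2}), if $d(x,\p\omega)>\e$ then the ball $B(x,\e)$ lies entirely inside $\omega$ or entirely inside $\mathbb{R}^N\setminus\o\omega$; in either case $\chi_\omega^\e$ is locally constant near $x$, equal to $1$ or $0$ respectively (using $\int\eta_\e=1$). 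Consequently both $\nabla\chi_\omega^\e$ and $\chi_\omega^\e-\chi_\omega$ vanish on $\{d(\cdot,\p\omega)>\e\}$, that is, they are supported in $\omega_\e$ as defined in \eqref{measurable set}.

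For the gradient bound I would first obtain a crude $L^\infty$ estimate. Writing $\nabla\chi_\omega^\e=(\nabla\eta_\e)*\chi_\omega$ and using Young's inequality together with the scaling identity $\|\nabla\eta_\e\|_{L^1(\mathbb{R}^N)}=\e^{-1}\|\nabla\eta\|_{L^1(\mathbb{R}^N)}$, one gets $\|\nabla\chi_\omega^\e\|_{L^\infty(\Omega)}\leq C\e^{-1}$; this already settles $p=+\infty$. For $p\in[1,+\infty)$ I would combine this $L^\infty$ bound with the support confinement and Lemma~\ref{Measure}:
$$
\|\nabla\chi_\omega^\e\|_{L^p(\Omega)}^p=\int_{\Omega\cap\omega_\e}|\nabla\chi_\omega^\e|^p\,\mathrm dx\leq\|\nabla\chi_\omega^\e\|_{L^\infty(\Omega)}^p\,|\omega_\e|\leq (C\e^{-1})^p\,C\e,
$$
where $|\omega_\e|\leq 2(1+\e/\mu)^{N-1}|\p\omega|\e\leq C\e$ for $\e\in(0,\mu)$ comes from \eqref{measure-1}. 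Taking the $p$-th root gives $\|\nabla\chi_\omega^\e\|_{L^p(\Omega)}\leq C\e^{-1+1/p}$, which is the first inequality of \eqref{moll-5}.

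The second estimate follows the same template. Since $0\leq\chi_\omega^\e\leq 1$ and $\chi_\omega\in\{0,1\}$, we have $\|\chi_\omega^\e-\chi_\omega\|_{L^\infty(\Omega)}\leq 1$, which handles $p=+\infty$. For finite $p$, using again that $\chi_\omega^\e-\chi_\omega$ is supported in $\omega_\e$,
$$
\|\chi_\omega^\e-\chi_\omega\|_{L^p(\Omega)}^p=\int_{\Omega\cap\omega_\e}|\chi_\omega^\e-\chi_\omega|^p\,\mathrm dx\leq|\omega_\e|\leq C\e,
$$
so $\|\chi_\omega^\e-\chi_\omega\|_{L^p(\Omega)}\leq C\e^{1/p}$.

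The only nontrivial ingredient is the linear-in-$\e$ bound on $|\omega_\e|$, which is precisely Lemma~\ref{Measure} and which itself rests on the $C^2$ regularity of $\p\omega$ (to control the principal curvatures, hence the Jacobian of the normal map). Everything else is a routine consequence of Young's inequality and the support property of mollifiers, so I do not anticipate any genuine obstacle within this lemma: the real work has already been carried out in Lemma~\ref{Measure}. The single point to state with care is that the constants $C$ depend only on $N$, $\mu$, $|\p\omega|$ (through \eqref{measure-1}) and on $\|\nabla\eta\|_{L^1(\mathbb{R}^N)}$, and hence are independent of $\e$.
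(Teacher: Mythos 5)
Your proposal is correct and follows essentially the same route as the paper: both arguments rest on the pointwise bound $|\nabla\chi_\omega^\e|\leq C\e^{-1}$ (your Young's-inequality phrasing is equivalent to the paper's change-of-variables computation), the observation that $\nabla\chi_\omega^\e$ and $\chi_\omega^\e-\chi_\omega$ vanish outside the collar $\omega_\e$, and the bound $|\omega_\e|\leq C\e$ from Lemma \ref{Measure}. The only cosmetic difference is that you treat $p=+\infty$ directly, whereas the paper obtains it by letting $p\to+\infty$ in the finite-$p$ estimates.
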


Here and throughout the proof, $C$ is a generic positive constant independent of $p$ and $\e$.

\begin{proof}
Note that the case $p=+\infty$ follows by passing to the limit. Therefore it suffices to prove (\ref{moll-5}) for $1\leq p<+\infty$.
We set $\omega_\e^1=\{x\in \omega: d(x,\p\omega)>\e\}$ and $\omega_\e^2= \{x\not\in\omega: d(x,\p\omega)>\e\}$.
Then $\omega_\e^1$ and $\omega_\e^2$ are open subsets of $\mathbb{R}^N$ such that
\begin{equation}\label{moll-5:1}
\omega_\e^1\cup \omega_\e^2\cup \omega_\e=\mathbb{R}^N,
\end{equation}
where $\omega_\e$ is defined by (\ref{measurable set}).

On the one hand, by (\ref{moll-1}) and (\ref{moll-2}), we get that
\begin{equation}\label{moll-5:2}
\chi_\omega^\e(x)=\int_{\mathbb{R}^N} \f{1}{\e^N}\eta\left(\f{x-y}{\e}\right)\chi_\omega(y)\,\mathrm dy,
\end{equation}
which, combined with (\ref{moll-2}), yields
\begin{multline}\label{moll-6}
\nabla\chi_\omega^\e(x)=\d{\int_{\mathbb{R}^N}\f{1}{\e^N}} \d{\f{1}{\e}}\chi_\omega(y)\nabla\eta\left(\d{\f{x-y}{\e}}\right)
\,\mathrm dy
=\d{\f{1}{\e}\int_{\mathbb{R}^N}} \chi_\omega(x-\e y)\nabla\eta(y)\,\mathrm dy\\
=\d{\f{1}{\e}\int_{\{y\in \mathbb{R}^N: |y|\leq 1\}}} \chi_\omega(x-\e y)\nabla\eta(y)\,\mathrm dy.
\end{multline}
On the other hand, by (\ref{moll-5:2}), we have
$$
\chi_\omega^\e(x)=\d{\int_{\mathbb{R}^N}} \eta(y)\chi_\omega(x-\e y)\,\mathrm dy
=\d{\int_{\{y\in \mathbb{R}^N: |y|<1\}}} \eta(y)\chi_\omega(x-\e y)\,\mathrm dy.
$$
This implies that
\begin{equation}\label{moll-7}
\chi_\omega^\e(x)=\left\{
\begin{array}{lll}
1&\mbox{if}&x\in \omega_\e^1,\\
0&\mbox{if}&x\in \omega_\e^2.
\end{array}
\right.
\end{equation}
It follows from (\ref{moll-5:1}), (\ref{moll-6}) and (\ref{moll-7}) that
$\|\nabla \chi_\omega^\e\|_{L^p(\Omega)}^p\leq \int_{\omega_\e} |\nabla \chi_\omega^\e(x)|^p\,\mathrm dx\leq (C\e^{-1})^p |\omega_\e|$,
which, combined with Lemma \ref{Measure}, yields $\|\nabla \chi_\omega^\e\|_{L^p(\Omega)}\leq C\e^{-1+\f{1}{p}}$.

Besides, by (\ref{moll-7}) and (\ref{moll-5:1}), we have
\begin{equation*}
|\chi_\omega^\e(x)-\chi_\omega(x)|\leq
\left\{
\begin{array}{lll}
0&\mbox{if}&x\in \omega_\e^1\cup \omega_\e^2,\\
1&\mbox{if}&x\in \omega_\e.
\end{array}\right.
\end{equation*}
This, together with Lemma \ref{Measure}, implies that $\|\chi_\omega^\e-\chi_\omega\|_{L^p(\Omega)}\leq |\omega_\e|^{\f{1}{p}}\leq C\e^{\f{1}{p}}$.
This completes the proof.
\end{proof}

\subsubsection{A useful estimate}\label{sec_usefulestimate}
The following estimate for a linear heat equation is not standard.

\begin{lemma}\label{intuitive}
Let $\omega\subset\Omega$ be a subset having a $C^2$ boundary. Let $p\in (1,+\infty)$, let $T_1$ and $T_2$ be two nonnegative real numbers such that $T_1<T_2$, and let $z_0\in H_0^1(\Omega)$. Let $z$ be the solution of
\begin{equation}\label{intuitive-1}
\left\{
\begin{array}{lll}
\p_t z-\triangle z=0&\mbox{in}&\Omega\times (T_1,T_2),\\
z=0&\mbox{on}&\p\Omega\times (T_1,T_2),\\
z(T_1)=\chi_\omega z_0&\mbox{in}&\Omega.
\end{array}\right.
\end{equation}
If $\omega=\Omega$, then
\begin{equation}\label{intuitive-3}
\|z(s)-z(T_1)\|\leq (s-T_1)^{\f{1}{2}}\|z_0\|_{H_0^1(\Omega)},
\end{equation}
for every $s\in [T_1,T_2]$.

If $\omega\not=\Omega$, then
\begin{equation}\label{intuitive-2}
\|z(s)-z(T_1)\|\leq\left\{
\begin{array}{rll}
C(T_2)(s-T_1)^{\f{1}{2N}}\|z_0\|_{H_0^1(\Omega)}&\mbox{if}&N\geq 3,\\
C(T_2,p)(s-T_1)^{\f{1}{4p}}\|z_0\|_{H_0^1(\Omega)}&\mbox{if}&N=2,\\
C(T_2)(s-T_1)^{\f{1}{4}}\|z_0\|_{H_0^1(\Omega)}&\mbox{if}&N=1,
\end{array}\right.
\end{equation}
for every $s\in [T_1,T_2]$, for some constants $C(T_2)>0$ and $C(T_2,p)$ not depending on $z_0$.
\end{lemma}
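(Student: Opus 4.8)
The plan is to treat the two cases by quite different means: the case $\omega=\Omega$ is a direct spectral computation, while the case $\omega\neq\Omega$ is handled by splitting the (non-$H_0^1$) initial datum $\chi_\omega z_0$ into a smooth part and a remainder via the regularizations $\chi_\omega^\e$ of Lemma~\ref{moll-4}, and then optimizing the regularization parameter against $s-T_1$.

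First, for $\omega=\Omega$ --- and more generally for any datum $v_0\in H_0^1(\Omega)$, which is what I will really need --- I would expand $v_0=\sum_k a_k e_k$ in the orthonormal basis of Dirichlet eigenfunctions $e_k$ with eigenvalues $\lambda_k>0$, so that $e^{(t-T_1)\triangle}v_0=\sum_k a_k e^{-\lambda_k(t-T_1)}e_k$ and hence $\|z(s)-z(T_1)\|^2=\sum_k a_k^2\,(1-e^{-\lambda_k(s-T_1)})^2$. The elementary inequality $(1-e^{-x})^2\le x$ for $x\ge 0$ then gives $\|z(s)-z(T_1)\|^2\le (s-T_1)\sum_k \lambda_k a_k^2=(s-T_1)\|\nabla v_0\|^2$, which is exactly \eqref{intuitive-3} (with $v_0=z_0$). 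This settles the first case and supplies the basic smoothing-rate estimate used below.

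For $\omega\neq\Omega$, I would write $\chi_\omega z_0=\chi_\omega^\e z_0+(\chi_\omega-\chi_\omega^\e)z_0$ and split the solution accordingly as $z=z^\e+r^\e$, where $z^\e$ has the smooth datum $\chi_\omega^\e z_0\in H_0^1(\Omega)$ and $r^\e$ has the datum $(\chi_\omega-\chi_\omega^\e)z_0$. Applying the spectral estimate to $z^\e$ gives $\|z^\e(s)-z^\e(T_1)\|\le (s-T_1)^{1/2}\|\chi_\omega^\e z_0\|_{H_0^1(\Omega)}$, while for $r^\e$ the $L^2$-contractivity of the heat semigroup gives $\|r^\e(s)-r^\e(T_1)\|\le 2\|(\chi_\omega-\chi_\omega^\e)z_0\|$. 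The two quantities are then estimated by Hölder's inequality, the Sobolev embedding $H_0^1(\Omega)\hookrightarrow L^q(\Omega)$, and Lemma~\ref{moll-4}: with $\tfrac12=\tfrac1r+\tfrac1q$, from $\nabla(\chi_\omega^\e z_0)=\chi_\omega^\e\nabla z_0+z_0\nabla\chi_\omega^\e$ one has $\|\chi_\omega^\e z_0\|_{H_0^1}\le \|z_0\|_{H_0^1}+\|z_0\|_{L^q}\|\nabla\chi_\omega^\e\|_{L^r}\le C\e^{-1+1/r}\|z_0\|_{H_0^1}$ and $\|(\chi_\omega-\chi_\omega^\e)z_0\|\le \|z_0\|_{L^q}\|\chi_\omega-\chi_\omega^\e\|_{L^r}\le C\e^{1/r}\|z_0\|_{H_0^1}$, where the admissible $q$ is dictated by the dimension: $q=2^{*}=2N/(N-2)$, hence $r=N$, for $N\ge 3$; any finite $q>2$, giving $r\in(2,\infty)$, for $N=2$; and $q=\infty$, $r=2$, using $H_0^1\hookrightarrow L^\infty$, for $N=1$. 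Collecting terms yields
\[
\|z(s)-z(T_1)\|\le C\bigl((s-T_1)^{1/2}\e^{-1+1/r}+\e^{1/r}\bigr)\|z_0\|_{H_0^1},
\]
and the balancing choice $\e=(s-T_1)^{1/2}$ gives the bound $C(s-T_1)^{1/(2r)}\|z_0\|_{H_0^1}$. Inserting the dimension-dependent value of $r$ produces the three exponents $1/(2N)$, $1/(4p)$ (with $p$ determined by the chosen $q$ through $q=2p/(p-1)$), and $1/4$ claimed in \eqref{intuitive-2}.

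Two routine points remain. The choice $\e=(s-T_1)^{1/2}$ is only admissible when it lies in $(0,\mu)$, i.e.\ for $s-T_1<\mu^2$; for $s-T_1\ge\mu^2$ the left-hand side is bounded by $2\|\chi_\omega z_0\|\le 2\|z_0\|\le C\|z_0\|_{H_0^1}$ while $(s-T_1)^{1/(2r)}$ is bounded below, so the estimate holds after enlarging the constant. I expect the genuine difficulty to be the second case: the delicate feature is that $\|\chi_\omega^\e z_0\|_{H_0^1}$ blows up like $\e^{-1+1/r}$ because $\nabla\chi_\omega^\e$ concentrates on the shrinking collar $\omega_\e$, and the whole argument hinges on compensating this blow-up by the smoothing factor $(s-T_1)^{1/2}$; the sharp blow-up exponent of Lemma~\ref{moll-4} together with the best available Sobolev exponent produce the exact rates, and in dimension $N=2$ the loss of the endpoint Sobolev embedding forces both the $p$-dependent constant and the exponent $1/(4p)<1/4$.
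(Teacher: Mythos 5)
Your proposal is correct and follows essentially the same route as the paper: the same decomposition $\chi_\omega z_0=\chi_\omega^\e z_0+(\chi_\omega-\chi_\omega^\e)z_0$, the smoothing bound $(s-T_1)^{1/2}\|\nabla(\chi_\omega^\e z_0)\|$ for the regular part, $L^2$-contraction for the remainder, the H\"older/Sobolev estimates combined with Lemma \ref{moll-4}, and the balancing choice $\e\sim\sqrt{s-T_1}$ with the same dimension-dependent exponents. The only cosmetic differences are that you prove the smoothing estimate $\|e^{t\triangle}v_0-v_0\|\leq\sqrt{t}\,\|\nabla v_0\|$ by spectral expansion while the paper uses an integration-by-parts identity for $\int\|\p_t z_2\|^2\,\mathrm dt$, and that you enforce $\e<\mu$ by splitting off the regime $s-T_1\geq\mu^2$ (trivially bounded) whereas the paper builds the constraint into the constant $c_0$ in \eqref{intuitive-4}.
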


\begin{proof}
Since the proof of \eqref{intuitive-3} is similar to obtain but simpler than the one of \eqref{intuitive-2}, we assume that we are in the (more difficult) case where $\omega\not=\Omega$.
Let $\mu$ be as in Lemma \ref{Measure} and let $s\in (T_1,T_2]$. We set
\begin{equation}\label{intuitive-4}
c_0= \f{\min\{\mu,1\}}{2\max\{\sqrt{T_2},1\}}\quad\mbox{and}\quad \e= c_0\sqrt{s-T_1}.
\end{equation}
Note that $\e<\min\{\mu,1\}$.
By (\ref{intuitive-1}),  we have
\begin{equation}\label{intuitive-5}
z(s)=e^{(s-T_1)\triangle}(\chi_{\omega}-\chi_{\omega}^\e)z_0+e^{(s-T_1)\triangle}\chi_{\omega}^\e z_0
= z_1(s)+z_2(s),
\end{equation}
for every $s\in [T_1,T_2]$.
We have
$$
\|z_2(s)-z_2(T_1)\|^2=\d{\int_\Omega} \Big|\d{\int_{T_1}^s} \p_t z_2\, \mathrm dt\Big|^2\mathrm dx
\leq (s-T_1)\d{\int_{T_1}^{T_2}} \|\p_t z_2\|^2\, \mathrm dt,
$$
for every $s\in [T_1,T_2]$. By definition, $z_2$ is the unique solution of the Dirichlet heat equation with initial condition $z_2(T_1)=\chi_{\omega}^\e z_0$. By integration by parts, we have
\begin{equation*}
\begin{split}
\d{\int_{T_1}^{T_2}} \|\p_t z_2\|^2\, \mathrm dt
&   =  \int_{T_1}^{T_2}\int_\Omega (\p_t z_2(t,x))^2\, \mathrm dx\, \mathrm dt =  \int_{T_1}^{T_2}\int_\Omega \p_t z_2(t,x)\cdot \triangle z_2(t,x)\, \mathrm dx\, \mathrm dt\\
&   = \frac{1}{2}\int_\Omega |\nabla z_2(T_1,x)|^2 \mathrm dx - \frac{1}{2}\int_\Omega |\nabla z_2(T_2,x)|^2 \mathrm dx  \leq \Vert\nabla z_2(T_1)\Vert^2 ,
\end{split}
\end{equation*}
and therefore we get that
$$
\|z_2(s)-z_2(T_1)\|^2 \leq (s-T_1) \|\nabla z_2(T_1)\|^2,
$$
for every $s\in [T_1,T_2]$. It follows from (\ref{intuitive-5}) that
\begin{equation}\label{intuitive-7}
\|z(s)-z(T_1)\|\leq\|z_1(s)-z_1(T_1)\|+\|z_2(s)-z_2(T_1)\|
\leq2\|(\chi_{\omega}-\chi_{\omega}^\e)z_0\|+\sqrt{s-T_1} \|\nabla (\chi_{\omega}^\e z_0)\|.
\end{equation}
If $N\geq3$, then, using the H\"{o}lder inequality, the Sobolev embedding $H^1_0(\Omega)\hookrightarrow L^{\f{2N}{N-2}}(\Omega)$,
Lemma~\ref{moll-4} and (\ref{intuitive-4}), we obtain that
\begin{equation*}
\|(\chi_{\omega}-\chi_{\omega}^\e)z_0\| \leq
\|z_0\|_{L^{\f{2N}{N-2}}(\Omega)}\|\chi_\omega-\chi_{\omega}^\e\|_{L^N(\Omega)}
\leq C\|z_0\|_{H_0^1(\Omega)} \e^{\f{1}{N}} ,
\end{equation*}
and
\begin{equation*}
 \|\nabla (\chi_{\omega}^\e z_0)\|\leq \|z_0\|_{H_0^1(\Omega)}
 +\|z_0\|_{L^{\f{2N}{N-2}}(\Omega)}\|\nabla\chi_{\omega}^\e\|_{L^N(\Omega)}
 \leq C\|z_0\|_{H_0^1(\Omega)} (1+\e^{\f{1}{N}-1}).
\end{equation*}
These estimates, together with (\ref{intuitive-7}), imply that
\begin{equation}\label{intuitive-8}
\|z(s)-z(T_1)\|\leq C\|z_0\|_{H_0^1(\Omega)}\Big(\e^{\f{1}{N}}+\sqrt{s-T_1}\e^{\f{1}{N}-1}\Big).
\end{equation}
From (\ref{intuitive-8}) and (\ref{intuitive-4}) it follows that (\ref{intuitive-2}) holds.

If $N=2$, then, similarly,
\begin{equation*}
\|(\chi_{\omega}-\chi_{\omega}^\e)z_0\|\leq
\|z_0\|_{L^{\f{2p}{p-1}}(\Omega)}\|\chi_\omega-\chi_{\omega}^\e\|_{L^{2p}(\Omega)}
\leq C(p)\|z_0\|_{H_0^1(\Omega)} \e^{\f{1}{2p}} ,
\end{equation*}
\begin{equation*}
\|\nabla (\chi_{\omega}^\e z_0)\|\leq \|z_0\|_{H_0^1(\Omega)}+
\|z_0\|_{L^{\f{2p}{p-1}}(\Omega)}\|\nabla\chi_{\omega}^\e\|_{L^{2p}(\Omega)}
\leq C(p)\|z_0\|_{H_0^1(\Omega)} (1+\e^{\f{1}{2p}-1}),
\end{equation*}
and using (\ref{intuitive-7}) we infer that
\begin{equation}\label{intuitive-9}
\|z(s)-z(T_1)\|\leq C(p)\|z_0\|_{H_0^1(\Omega)}
\Big(\e^{\f{1}{2p}}+\sqrt{s-T_1}\e^{\f{1}{2p}-1}\Big).
\end{equation}
It follows from (\ref{intuitive-9}) and (\ref{intuitive-4})  that (\ref{intuitive-2}) holds.

If $N=1$, then
\begin{equation*}
\|(\chi_{\omega}-\chi_{\omega}^\e)z_0\| \leq
\|z_0\|_{C(\o{\Omega})}\|\chi_\omega-\chi_{\omega}^\e\|
\leq C\|z_0\|_{H_0^1(\Omega)} \e^{\f{1}{2}} ,
\end{equation*}
\begin{equation*}
\|\nabla (\chi_{\omega}^\e z_0)\|
\leq \|z_0\|_{H_0^1(\Omega)}+\|z_0\|_{C(\o{\Omega})}\|\nabla\chi_{\omega}^\e\|
\leq  C\|z_0\|_{H_0^1(\Omega)} (1+\e^{-\f{1}{2}}),
\end{equation*}
which, combined with (\ref{intuitive-7}), imply that
\begin{equation}\label{intuitive-10}
\|z(s)-z(T_1)\|\leq C\|z_0\|_{H_0^1(\Omega)}\Big(\e^{\f{1}{2}}+\sqrt{s-T_1}\e^{-\f{1}{2}}\Big).
\end{equation}
By (\ref{intuitive-10}) and (\ref{intuitive-4}), we obtain (\ref{intuitive-2}).
The proof is complete.
\end{proof}

\subsubsection{Proof of the state error estimates}
We prove (\ref{infty-estimate:1}) only when $N\geq 3$, the other cases being similar.
Let $u^*$ and $U_n^*$ be the optimal controls solutions of $\OCP$ and $\IOCPn$ respectively, where $U_n^*=(u_{1,n}^*,u_{2,n}^*,\ldots,u_{n-1,n}^*)\in (L^2(\Omega))^{n-1}$.
Let $u_n^*$ be given by (\ref{error-2}). We have
\begin{equation}\label{infty-estimate:3}
\|y^*-y_n^*\|_{L^\infty(0,T;L^2(\Omega))}\leq\|y^*-y(u_n^*)\|_{L^\infty(0,T;L^2(\Omega))}+\|y(u_n^*)-y_n^*\|_{L^\infty(0,T;L^2(\Omega))}.
\end{equation}
By (\ref{Maxi-1}), (\ref{error-3}) and Lemma~\ref{Energy}, we infer that
\begin{equation}\label{infty-estimate:4}
\|y^*-y(u_n^*)\|_{C([0,T];L^2(\Omega))}\leq   C(T)  \|u^*-u_n^*\|_{L^2(0,T;L^2(\Omega))}.
\end{equation}
Besides, we claim that
\begin{equation}\label{infty-estimate:5}
 \|y(u_n^*)-y_n^*\|_{L^\infty(0,T;L^2(\Omega))}\leq  C(T)  h_n^{1/2N} \|y_d\|_{L^2(0,T;L^2(\Omega))} .
\end{equation}
Then  (\ref{infty-estimate:1}) follows from (\ref{infty-estimate:3}), (\ref{infty-estimate:4}), (\ref{infty-estimate:5}) and Theorem~\ref{Theorem-Error}.

Let us prove (\ref{infty-estimate:5}).
On the one hand, by  (\ref{error-19}) and (\ref{Prin-3}), we have
\begin{equation}\label{infty-estimate:6}
\begin{split}
\|y_{n}^*(0)-y(u_n^*)(0)\|
&\leq\left\|(I-e^{T\triangle})^{-1} \d{\sum_{j=2}^n} \left(\f{1}{h_n}\int_{\tau_{j-1}}^{\tau_j}e^{(T-s)\triangle}\chi_\omega u_{j-1,n}^*\,\mathrm ds-e^{(T-\tau_{j-1})\triangle}\chi_\omega u_{j-1,n}^*\right)\right\|\\
&\leq  C(T)  \left\|\d{\sum_{j=2}^n}  \int_{\tau_{j-1}}^{\tau_j} e^{(T-s)\triangle}
\left( I-e^{(s-\tau_{j-1})\triangle}\right) \chi_\omega p^*_n(\tau_{j-1}) \, \mathrm ds\right\|\\
&\leq  C(T)  \d{\sum_{j=2}^n} \int_{\tau_{j-1}}^{\tau_j}
\left\| \left(I-e^{(s-\tau_{j-1})\triangle}\right) \chi_\omega p^*_n(\tau_{j-1})\right\| \,\mathrm ds.
\end{split}
\end{equation}
On the other hand, by (\ref{error-25}), (\ref{error-2}) and (\ref{Prin-3}), we infer that:
\begin{itemize}
\item for every $t\in [0,\tau_1]$,
\begin{equation}\label{infty-estimate:7}
y(u_n^*)(t)-y_n^*(t)=e^{t\triangle}(y(u_n^*)(0)-y_n^*(0)) ;
\end{equation}

\item for every $t\in (\tau_1,\tau_2]$,
\begin{eqnarray}\label{infty-estimate:8}
y(u_n^*)(t)-y_n^*(t)
&=&e^{t\triangle}(y(u_n^*)(0)-y_n^*(0))
+\d{\int_{\tau_1}^t} e^{(t-s)\triangle}\chi_\omega h_n^{-1}u_{1,n}^*\,\mathrm ds
-e^{(t-\tau_1)\triangle}\chi_\omega u_{1,n}^* \\
&=&e^{t\triangle}(y(u_n^*)(0)-y_n^*(0))+\d{\int_{\tau_1}^t} e^{(t-s)\triangle}\chi_\omega p_n^*(\tau_1)\,\mathrm ds -\d{\int_{\tau_1}^{\tau_2}} e^{(t-\tau_1)\triangle}\chi_\omega p_n^*(\tau_1)\,\mathrm ds ; \nonumber
\end{eqnarray}

\item for every $t\in (\tau_{i-1},\tau_i]$, with $i\geq 3$,
\begin{eqnarray}\label{infty-estimate:9}
y(u_n^*)(t)-y_{n}^*(t)
&=&e^{t\triangle}(y(u_n^*)(0)-y_{n}^*(0))
+\d{\sum_{j=2}^{i-1}} \Big(\d{\int_{\tau_{j-1}}^{\tau_j}} e^{(t-s)\triangle}\chi_\omega u_n^*(s)\,\mathrm ds
-e^{(t-\tau_{j-1})\triangle}\chi_\omega u_{j-1,n}^*\Big) \nonumber\\
&\qquad&+\d{\int_{\tau_{i-1}}^t} e^{(t-s)\triangle}\chi_\omega u_n^*(s)\,\mathrm ds
-e^{(t-\tau_{i-1})\triangle}\chi_\omega u_{i-1,n}^*    \\
&=&e^{t\triangle}(y(u_n^*)(0)-y_{n}^*(0))
+\d{\sum_{j=2}^{i-1}} \d{\int_{\tau_{j-1}}^{\tau_j}} e^{(t-s)\triangle}[I-e^{(s-\tau_{j-1})\triangle}]\chi_\omega p_n^*(\tau_{j-1})\,\mathrm ds  \nonumber\\
&\qquad&+\d{\int_{\tau_{i-1}}^t} e^{(t-s)\triangle}\chi_\omega p_n^*(\tau_{i-1})\,\mathrm ds
-\d{\int_{\tau_{i-1}}^{\tau_i}}e^{(t-\tau_{i-1})\triangle}\chi_\omega p_n^*(\tau_{i-1})\,\mathrm ds.\nonumber
\end{eqnarray}
\end{itemize}
It follows from (\ref{Prin-2}), Lemma~\ref{Energy} and (\ref{error-11}) that
\begin{equation}\label{infty-estimate:10}
\|p_n^*\|_{C([0,T];H_0^1(\Omega))}\leq  C(T) \|y_d\|_{L^2(0,T;L^2(\Omega))} ,
\end{equation}
which, combined with (\ref{infty-estimate:7}), (\ref{infty-estimate:8}), (\ref{infty-estimate:9}) and (\ref{infty-estimate:6}), implies that
\begin{eqnarray*}
&&\|y(u_n^*)-y_n^*\|_{L^\infty(0,T;L^2(\Omega))}\\
&\leq&
  C(T)  \displaystyle{\sum_{j=2}^n \int_{\tau_{j-1}}^{\tau_j}} \|[I-e^{(s-\tau_{j-1})\triangle} ]\chi_\omega p^*_n(\tau_{j-1})\| \,\mathrm ds
 +  C(T)  h_n \|y_d\|_{L^2(0,T;L^2(\Omega))} .
\end{eqnarray*}
This, together with Lemma~\ref{intuitive} and (\ref{infty-estimate:10}), yields
\begin{eqnarray*}
&&\|y(u_n^*)-y_n^*\|_{L^\infty(0,T;L^2(\Omega))}\\
&\leq&
  C(T)  \displaystyle{\sum_{j=2}^n \int_{\tau_{j-1}}^{\tau_j}} (s-\tau_{j-1})^{\f{1}{2N}}\|p^*_n(\tau_{j-1})\|_{H_0^1(\Omega)}\,\mathrm ds
 + C(T)  h_n \|y_d\|_{L^2(0,T;L^2(\Omega))} \\
 &\leq&  C(T)  h_n^{1/2N} \|y_d\|_{L^2(0,T;L^2(\Omega))} ,
\end{eqnarray*}
and (\ref{infty-estimate:5}) follows. This ends the proof.

\subsection{Proof of Theorem \ref{thm_SD}}\label{sec_proof_thm_SD}
\subsubsection{Proof of the control error estimate}
Let us establish \eqref{errorSDcontrol}.
As in Section \ref{sec232}, the proof goes in three steps.

\paragraph{Step 1.} We claim that
\begin{equation}\label{add_5}
\sum_{i=1}^n\int_{\tau_{i-1}}^{\tau_i} \|u^*-v_{i,n}^*\|^2\,\mathrm dt = I_1+I_2 ,
\end{equation}
with
$$
I_1 = \sum_
{i=1}^n\int_{\tau_{i-1}}^{\tau_i} \langle \chi_\omega p^*-\chi_\omega \bar{p}_n^*,u^*-v_{i,n}^*\rangle \,\mathrm dt,
$$
and
$$
I_2 = \sum_{i=1}^n\int_{\tau_{i-1}}^{\tau_i} \left\langle \chi_\omega \bar{p}_n^*-
\frac{1}{h_n}\chi_\omega \int_{\tau_{i-1}}^{\tau_i} \bar{p}_n^*(s)\,\mathrm ds, u^*-v_{i,n}^*\right\rangle \,\mathrm dt,
$$
where $p^*$ is defined by \eqref{Maxi-2} and $\bar{p}_n^*$ is defined by \eqref{PMP_SD2}.

The claim follows from \eqref{Maxi-3}, (\ref{PMP_SD3}) and from the fact that
$$
\begin{array}{lll}
\displaystyle{\sum_{i=1}^n\int_{\tau_{i-1}}^{\tau_i}} \|u^*-v_{i,n}^*\|^2\,\mathrm dt
&=&\displaystyle{\sum_{i=1}^n\int_{\tau_{i-1}}^{\tau_i}} \langle u^*-v_{i,n}^*,u^*-v_{i,n}^*\rangle \,\mathrm dt\\
&=&\displaystyle{\sum_{i=1}^n\int_{\tau_{i-1}}^{\tau_i}} \left\langle
\chi_\omega p^*- \frac{1}{h_n}\chi_\omega \int_{\tau_{i-1}}^{\tau_i} \bar{p}_n^*(s)\,\mathrm ds,u^*-v_{i,n}^*\right\rangle \,\mathrm dt.
\end{array}
$$

\paragraph{Step 2.} We claim that
\begin{equation}\label{add_6}
I_1\leq 0.
\end{equation}
Indeed, using \eqref{Maxi-1}, \eqref{Maxi-2}, \eqref{PMP_SD1} and \eqref{PMP_SD2}, we get that
\begin{equation}\label{add_7}
\left\{
\begin{array}{lll}
\partial_t (y^*-\bar{y}_n^*)-\triangle (y^*-\bar{y}_n^*)=\chi_\omega (u^*-f_n^*)&\mbox{in}&\Omega\times
(0,T),\\
y^*-\bar{y}_n^*=0&\mbox{on}&\partial \Omega\times (0,T),\\
(y^*-\bar{y}_n^*)(0)=(y^*-\bar{y}_n^*)(T)&\mbox{in}&\Omega ,
\end{array}\right.
\end{equation}
and
\begin{equation}\label{add_8}
\left\{
\begin{array}{lll}
\partial_t (p^*-\bar{p}_n^*)+\triangle (p^*-\bar{p}_n^*)=y^*-\bar{y}_n^*&\mbox{in}&\Omega\times
(0,T),\\
p^*-\bar{p}_n^*=0&\mbox{on}&\partial \Omega\times (0,T),\\
(p^*-\bar{p}_n^*)(0)=(p^*-\bar{p}_n^*)(T)&\mbox{in}&\Omega.
\end{array}\right.
\end{equation}
Multiplying the first equation of \eqref{add_7} by $p^*-\bar{p}_n^*$ and integrating over $\Omega\times (0,T)$, by \eqref{add_7} and \eqref{add_8}, we obtain that
$$
\int_0^T \langle p^*-\bar{p}_n^*,\chi_\omega (u^*-f_n^*)\rangle \,\mathrm dt=-\int_0^T \|y^*-\bar{y}_n^*\|^2\,\mathrm dt\leq 0,
$$
which, combined with \eqref{def_fn*}, gives \eqref{add_6}.

\paragraph{Step 3.} We claim that
\begin{equation}\label{add_9}
|I_2|\leq C(T)h_n \|y_d\|_{L^2(0,T;L^2(\Omega))} 
\left(\sum_{i=1}^n\int_{\tau_{i-1}}^{\tau_i}\|u^*-v_{i,n}^*\|^2\,\mathrm dt\right)^{\frac{1}{2}}.
\end{equation}
Indeed, on one hand, we first note that
\begin{equation}\label{add_10}
|I_2|\leq \sum_{i=1}^n\int_{\tau_{i-1}}^{\tau_i}\left\|\bar{p}_n^*-\frac{1}{h_n}\int_{\tau_{i-1}}^{\tau_i}\bar{p}_n^*(s)\,\mathrm ds\right\| \, \|u^*-v_{i,n}^*\|\,\mathrm dt.
\end{equation}
It is easy to check that, for every $t\in [\tau_{i-1},\tau_i]$,
\begin{multline}\label{add_11}
\left\|\bar{p}_n^*(t)-\frac{1}{h_n}\int_{\tau_{i-1}}^{\tau_i}\bar{p}_n^*(s)\, \mathrm ds\right\|
=\left\|\displaystyle{\frac{1}{h_n}\int_{\tau_{i-1}}^{\tau_i}}(\bar{p}_n^*(t)-\bar{p}_n^*(s))\, \mathrm ds\right\|
=\frac{1}{h_n}\left\|\displaystyle{\int_{\tau_{i-1}}^{\tau_i}\int_s^t}\partial_\tau \bar{p}_n^*(\tau)\,\mathrm d\tau\,\mathrm ds\right\| \\
\leq \frac{1}{h_n}\int_{\tau_{i-1}}^{\tau_i}\int_{\tau_{i-1}}^{\tau_i}\|\partial_\tau \bar{p}_n^*(\tau)\|\,\mathrm d\tau\,\mathrm ds
=\int_{\tau_{i-1}}^{\tau_i}\|\partial_\tau \bar{p}_n^*(\tau)\|\,\mathrm d\tau
\leq h_n^{1/2}\left(\displaystyle{\int_{\tau_{i-1}}^{\tau_i}}\|\partial_\tau \bar{p}_n^*(\tau)\|^2\,\mathrm d\tau\right)^{1/2}.
\end{multline}
It follows from \eqref{add_10}, \eqref{add_11} and from the H\"{o}lder inequality that
\begin{equation}\label{add_12}
\begin{split}
|I_2|&\leq\displaystyle{\sum_{i=1}^n} h_n^{1/2}\left(\displaystyle{\int_{\tau_{i-1}}^{\tau_i}}\|\partial_t \bar{p}_n^*\|^2\,\mathrm dt\right)^{\frac{1}{2}}
\displaystyle{\int_{\tau_{i-1}}^{\tau_i}}\|u^*-v_{i,n}^*\|\,\mathrm dt\\
&\leq h_n\displaystyle{\sum_{i=1}^n} \left(\displaystyle{\int_{\tau_{i-1}}^{\tau_i}}\|\partial_t \bar{p}_n^*\|^2\,\mathrm dt\right)^{\frac{1}{2}}
\left(\displaystyle{\int_{\tau_{i-1}}^{\tau_i}}\|u^*-v_{i,n}^*\|^2\,\mathrm dt\right)^{\frac{1}{2}}\\
&\leq h_n\left(\displaystyle{\int_0^T}\|\partial_t \bar{p}_n^*\|^2\,\mathrm dt\right)^{\frac{1}{2}}
\left(\displaystyle{\sum_{i=1}^n\int_{\tau_{i-1}}^{\tau_i}}\|u^*-v_{i,n}^*\|^2\,\mathrm dt\right)^{\frac{1}{2}}.
\end{split}
\end{equation}
On the other hand, since $(\bar{y}_n^*,V_n^*)$ is optimal (with $V_n^*=(v_{1,n}^*,\dots,v_{n,n}^*)$), we have
$J(\bar{y}_n^*,f_n^*)\leq J(0,0)$, from which it follows that
\begin{equation}\label{add_13}
\int_0^T \|\bar{y}_n^*-y_d\|^2\,\mathrm dt\leq  \int_0^T \|y_d\|^2\,\mathrm dt ,
\end{equation}
and
\begin{equation}\label{wang-5}
 \int_0^T \|f_n^*\|^2\,\mathrm dt=h_n\sum_{i=1}^n\|v_{i,n}^*\|^2\leq \int_0^T \|y_d\|^2\,\mathrm dt .
\end{equation}
By \eqref{PMP_SD2}, \eqref{add_13} and Lemma \ref{Energy}, we get that
$\int_0^T \|\partial_t \bar{p}_n^*\|^2\,\mathrm dt\leq C(T) \int_0^T \|y_d\|^2\,\mathrm dt$ .
This, combined with \eqref{add_12}, implies \eqref{add_9}.

Finally, \eqref{errorSDcontrol} follows from \eqref{add_5}, \eqref{add_6} and \eqref{add_9}.

\subsubsection{Proof of the state  and cost functional error estimates}
We start with establishing   \eqref{errorSDstate}.
Using \eqref{Maxi-1} and \eqref{PMP_SD1}, we have
\begin{equation}\label{add_15}
\left\{
\begin{array}{lll}
\partial_t (y^*-\bar{y}_n^*)-\triangle (y^*-\bar{y}_n^*)=\chi_\omega (u^*-f_n^*)&\mbox{in}&\Omega\times (0,T),\\
y^*-\bar{y}_n^*=0&\mbox{on}&\partial\Omega\times (0,T),\\
(y^*-\bar{y}_n^*)(0)=(y^*-\bar{y}_n^*)(T)&\mbox{in}&\Omega.
\end{array}\right.
\end{equation}
 (\ref{errorSDstate}) follows from  \eqref{add_15}, Lemma~\ref{Energy} and (\ref{errorSDcontrol}).

Finally, by (\ref{def_J}), (\ref{errorSDcontrol}), (\ref{errorSDstate}), (\ref{add_13}) and (\ref{wang-5}), we obtain (\ref{wang-2}).

\bigskip

\noindent{\bf Acknowledgment.}
The first author acknowledges the support and hospitality of Wuhan University, and the support by FA9550-14-1-0214 of the EOARD-AFOSR.
The second and third authors were supported by the National Natural Science Foundation of China under grant 11371285.

\end{document}